\let\csname equation*\endcsname\relax
\let\csname endequation*\endcsname\relax
\def\uu{\text{\bf u}}
\def\ff{\text{\bf f}}
\def\ww{\text{\bf w}}
\def\sfA{{\sf A}}
\def\AA{\text{\bf A}}
\def\sfK{\text{\sf K}}
\def\KK{\text{\bf K}}
\def\CC{\mathbb{C}}
\def\RR{\mathbb{R}}
\def\cH{\mathcal{H}}
\def\dist{\text{\rm dist}}
\def\supp{\text{\rm supp}}
\newtheorem{theorem}{Theorem}
\newtheorem{proposition}{Proposition}
\newtheorem{definition}{Definition}
\begin{document}

\title[Photoacoustic tomography with partial data]{Photoacoustic tomography in attenuating media with partial data}

\author{Benjamin Palacios}

\address{Departments of Mathematics, Pontificia Universidad Cat\'olica de Chile.\\ Av. Vicu\~na Mackenna 4860, Macul, Santiago, Chile}
\ead{benjamin.palacios@mat.uc.cl}
\vspace{10pt}

\begin{abstract}
The attenuation of ultrasound waves in photoacoustic and thermoacoustic imaging presents an important drawback in the applicability of these modalities. This issue has been addressed previously in the applied and theoretical literature, and some advances have been made on the topic. In particular, stability inequalities have been proposed for the inverse problem of initial source recovery with partial observations under the assumption of unique determination of the initial pressure. The main goal of this work is to fill this gap, this is, we prove the uniqueness property for the inverse problem and establish the associated stability estimates as well. The problem of reconstructing the initial condition of acoustic waves in the complete-data setting is revisited and a new Neumann series reconstruction formula is obtained for the case of partial observations in a semi-bounded geometry. A numerical simulation is also included to test the method.
\end{abstract}

\section{Introduction}

The inverse problem of photoacoustic tomography (PAT) consists in the determination of the initial source of acoustic waves from measurements acquired at the boundary of a domain $\Omega\subset\RR^3$. This problem has been extensively studied for several years from theoretical and applied perspectives, leading to great developments in the imaging modality of PAT. For media without acoustic attenuation the inverse problem relies in the analysis of the classical wave operator $\partial_t^2 - c^2\Delta$, where $c(x)$ is a smoothly varying function representing the speed of sound in biological tissues. In this setting (and more generally for a Laplace-Beltrami operator $\Delta_g$), uniqueness for the inverse problem was established by Stefanov and Uhlmann in \cite{SU} for the general case of observations taking place at a portion of the boundary ---the partial data problem. In the same article, precise conditions for the observation time and the underlying geometry were determined in order to guarantee uniqueness, stability, and reconstruction of the initial source. Particularly, the optimal conditions guarantying  injectivity of the forward operator lean on a powerful unique continuation property for second order hyperbolic operators, consequence of Holmgren's uniqueness result (generalized later by Fritz John) 
for operators with analytic coefficient in a Euclidean background (see \cite[\S 8]{HoI}) and later extended to more general wave operators by other authors. The next formulation is derived from the work of Tataru \cite{T3}. 
\begin{theorem}{\cite[Theorem 4]{SU}}\label{thm:John}
Assume that $u\in H^1_{loc}$ satisfies $(\partial^2_t-c^2\Delta)u=0$ and $u=0$ in a neighborhood of $[-T,T]\times\{x_0\}$, for some $T>0$ and $x_0\in\RR^n$. Then, 
\[
u(t,x) = 0\quad\text{for}\quad |t|+\dist(x_0,x)\leq T.
\]
\end{theorem}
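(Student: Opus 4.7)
My approach is to derive the statement from Tataru's unique continuation theorem \cite{T3} via a continuity argument along a foliation of the cone $\{|t|+\dist(x_0,x)\le T\}$ by hypersurfaces that are non-characteristic in the spatial variables. Tataru's theorem applies to second-order operators whose coefficients are analytic in a distinguished subset of the variables and merely smooth in the others; it propagates the vanishing of $u$ across any $C^2$ hypersurface whose conormal is non-characteristic for the principal part restricted to the non-analytic variables. For $P=\partial_t^2-c^2\Delta$, whose coefficients are $t$-independent and therefore trivially $t$-analytic, this reduces to requiring that the spatial component of the conormal be nonzero.

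\textbf{Foliation and continuation.} Let $\dist$ denote distance in the acoustic metric $c^{-2}\,dx^2$ and, for a parameter $\lambda>1$, introduce the two families
\[
\Sigma_{\lambda,c}^{\pm} := \{(t,x) : \pm\lambda t + \dist(x_0,x) = c\}.
\]
Each such surface has spatial conormal $\nabla_x\dist(x_0,\cdot)\ne 0$, so Tataru's hypothesis holds; in fact the surfaces are spacelike, since the principal symbol of $P$ evaluates to $\lambda^2-1>0$ on their conormals. Beginning from the open set where $u\equiv 0$, which by hypothesis contains a neighborhood of $[-T,T]\times\{x_0\}$, one applies Tataru's theorem together with a standard connectedness argument to extend the zero set across every surface of both families up to the maximal value of $c$ compatible with the initial data. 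The resulting vanishing region contains $\{\lambda t+\dist(x_0,x)<c_+(\lambda)\}\cap\{-\lambda t+\dist(x_0,x)<c_-(\lambda)\}$ with $c_\pm(\lambda)\to T$ as $\lambda\searrow 1$; taking the limit $\lambda\to 1^+$ exhausts the open cone $\{|t|+\dist(x_0,x)<T\}$, and the closed cone follows since the boundary has measure zero and $u\in H^1_{loc}$.

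\textbf{Main obstacle.} The chief difficulty is that the natural boundary $\{|t|+\dist(x_0,x)=T\}$ is the null cone of the acoustic metric and therefore characteristic for $P$, so Tataru's theorem cannot be applied directly on it. The spacelike approximations $\Sigma_{\lambda,c}^{\pm}$ with $\lambda>1$ circumvent this, but the passage to the limit $\lambda\to 1^+$ must be arranged so that the exhausted regions genuinely cover the full open cone; tracking the thresholds $c_\pm(\lambda)$ as $\lambda$ decreases is the step that requires most care. A secondary technical issue is the smoothness of $\dist(x_0,\cdot)$, which is $C^\infty$ only on $\RR^n\setminus\{x_0\}$ and up to the injectivity radius at $x_0$; this is implicit in the setting of the theorem and must be verified for the family $\Sigma_{\lambda,c}^{\pm}$ to be of class $C^2$.
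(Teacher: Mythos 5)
The paper does not actually prove this statement: it is imported verbatim as \cite[Theorem 4]{SU} and attributed to Tataru's unique continuation theorem for operators with partially analytic coefficients \cite{T3}, so there is no internal proof to compare against. Your strategy --- invoke Tataru's theorem for the $t$-independent (hence $t$-analytic) wave operator, sweep the double cone by the spacelike level sets of $\pm\lambda t+\dist(x_0,\cdot)$ for $\lambda>1$, and let $\lambda\searrow 1$ --- is precisely the standard derivation used in that literature, and the verification that these surfaces satisfy Tataru's non-characteristicity condition (nonvanishing spatial conormal; principal symbol equal to $\pm(\lambda^2-1)\neq 0$ on the conormal by the eikonal identity $c^2|\nabla_x\dist(x_0,\cdot)|^2=1$) is correct.

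There is, however, one point you downgrade to a ``secondary technical issue'' that is in fact a load-bearing part of the proof. The function $\dist(x_0,\cdot)$ is $C^\infty$ only away from $x_0$ and inside the cut locus of $x_0$ for the metric $c^{-2}dx^2$; beyond the cut locus it is merely Lipschitz, so the surfaces $\Sigma^{\pm}_{\lambda,c}$ are not $C^2$ and Tataru's theorem cannot be applied to them. As written, your foliation argument therefore proves the conclusion only for $T$ below the injectivity radius at $x_0$ (minus a margin near $x_0$ itself, which is harmless since $u$ vanishes on a full neighborhood of $[-T,T]\times\{x_0\}$). The missing ingredient is the iteration step: once the local version gives $u=0$ on $\{|t|+\dist(x_0,x)\le \epsilon\}$ for small $\epsilon$, every point $y$ in that set becomes a new admissible center, with $u$ vanishing near $[-(T-\dist(x_0,y)),\,T-\dist(x_0,y)]\times\{y\}$; chaining such centers along near-minimizing piecewise $C^1$ curves and using the triangle inequality for $\dist$ then exhausts the full cone $\{|t|+\dist(x_0,x)\le T\}$. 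This chaining is an essential idea, not a verification, and it is also what makes the statement meaningful for the paper's definition of $\dist$ as an infimum over piecewise $C^1$ curves. A smaller remark: your ``standard connectedness argument'' for pushing $c$ up to its maximal value is asserted rather than executed; one must check at each stage that the surface being crossed has a neighborhood of the relevant contact point already contained in the known zero set, which is where the interaction of the two families $\Sigma^{+}$ and $\Sigma^{-}$ has to be tracked.
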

Throughout the paper we denote by $\dist(x,y)$ the distance function induced by a metric $c^{-2}dx^2$, this is, the infimum of the length of piece-wise $C^1$-curve segments joining $x$ and $y$.

This theorem holds, in fact, for more general second order hyperbolic operators with smooth coefficients and analytic in time. 
In order for this property to be applied in the photoacoustic problem
it is necessary to extend the equation to negative times by considering even extensions of the wave fields. This is possible in part due to the absence of attenuation ---commonly modeled by a damping term $a(x)\partial_tu$ with $a\geq 0$--- and also because of the specific form of the initial conditions, $(u,u_t)|_{t=0}=(f,0)$. The difficulty of using the previous theorem in the attenuating media case (e.g. for the damped wave equation) and partial data was first noticed in \cite{H}. In contrast, for the complete data case it is still feasible to deduce uniqueness from Theorem \ref{thm:John}, but this requires to double the observation time needed in the unattenuated case which of course is  not a sharp requirement (see Theorem 3.1 in the aforementioned paper). An improvement in the complete data case was given in \cite{AP} by means of a foliation condition, demanding the geometry to be such that it allows the existence of particular families of strictly convex hypersurfaces with dense union. In that paper, the authors found that assuming the level sets of a boundary defining function are all strictly convex and foliate the domain, uniqueness with complete data holds for the sharp observation time and this still holds true by adding an integro-differential attenuation term of memory-type. The foliation condition is a common assumption on inverse geometric problems and it has been used, for instance, in the problem of determining a sound speed from the knowledge of photoacoustic data and the initial condition \cite{SU2}. In regard of uniqueness for the inverse photoacoustic problem, the foliation assumption appearing in \cite{AP} seems to be just a technical tool, allowing the application of a layer stripping argument from the boundary towards the interior, and thus by-passing the lack of a more general unique continuation property along the lines of Theorem \ref{thm:John}. 

On the other hand, the stability for the damped PAT problem with partial observations has been addressed previously in \cite{H} and more recently in \cite{HaN}, where a reconstruction procedures was also presented based on the study of the adjoint problem and iterative methods. In both articles, the stability is derived subject to the hypothesis of injectivity, however the injectivity of the partial data problem is left as an open question.

The main goal of this paper is to establish uniqueness and stability for the damped wave equation from observation taken in a relative open subset of the boundary $\Gamma\subset\partial\Omega$. This is the content of Theorem \ref{thm:uniq_stab}. Instead of using a specific unique continuation results for the wave equation, as the likes of Theorem \ref{thm:John}, the time-independency of the coefficients will permit us to deduce injectivity by means of Riesz's Lemma and uniqueness of the Cauchy problem for elliptic operators. The idea is to first obtain a microlocal stability estimate based on a visibility condition of singularities, then use this inequality to derive the injectivity with the aid of the aforementioned tools, and subsequently deduce stability by means of a compactness-uniqueness argument. This methodology is taken from \cite{BLR}. Being a method that exploits uniqueness properties of elliptic equations, it is not specifically tailored to the wave operator, and therefore, it is not expected to give the optimal restrictions on the parameters of the problem. Indeed, for the unattenuated case, it imposes a coarser lower bound on the observation time ---the optimal one obtained from 
Tataru's result for time-analytic coefficients.

We also consider the reconstruction question on both, the complete and partial data settings. In the former, this implies revisiting the reconstruction procedure based on time-reversal for the damped wave equation with arbitrary smooth bounded attenuations, introduced by the author in \cite{P}. We amend a few computations carried out in that first publication, and by doing so, we are naturally led to a generalization of the original result. The new version of the reconstruction is given in Theorem \ref{thm:reconstruction_freespace}. Regarding partial data, we first look at the classical setting of a bounded region of interest with acoustically transparent boundary where we make the connection between this article and the results in \cite{HaN}, and on the other hand, we consider the setting of an unbounded geometry with partially reflecting (or dissipative) boundary, where we obtain the reconstruction formula in Theorem \ref{thm:reconstruction}. Numerical simulations were performed to visualize the reconstruction procedure of the latter theorem and whose results are presented at the end of this article.

\section{Inverse problem and main result}\label{sec:IP}

Let $\Omega\subset\RR^n$ be an open connected set with smooth boundary, not necessarily bounded, and let $c(x)\geq c_0>0$ be a sound speed in $\RR^n$. We assume $c=1$ for all $x \notin \Omega$ and consider an attenuation coefficient $a(x)\geq 0$ compactly supported and bounded in $\Omega$. We consider the following damped wave operator
\begin{equation}\label{opP}
\square_au := \partial^2_tu -c^{2}(x)\Delta_g u+ a(x)\partial_t u,
\end{equation}
where we assume the sound speed $c$, the underlying metric $g$ and the attenuation coefficient $a$ are known. Furthermore, and for simplicity in the exposition, we make the following two assumptions: we suppose $c$ and $a$ are smooth functions; and $\Delta_g=\Delta$, this is we assume $g$ is the Euclidean metric. The subsequent analysis can be carried out under less regularity and for more general Riemannian metrics. We do not pursue the question of optimality in the regularity of these coefficients. 

The sound speed function $c(x)$ induces the {\em sound speed metric} $c^{-2}(x)dx^2$. Below we consider geodesics $\gamma$ associated to it and the convexity of the boundary (or portions of it) is understood in terms of this metric as well.\\

For a given initial source $\ff = (f_1,f_2)$ we consider $u$ to be the solution of
\begin{equation}\label{PAT}
\left\{
\begin{aligned}
u_{tt}-c^{2}(x)\Delta u + a(x)\partial_tu&=0,&&\text{in }[0,T]\times U,\\
(u,u_t)|_{t=0} &= \ff,&&\text{in }U,\\
\end{aligned}
\right.
\end{equation}
with either $U=\RR^n$ or $U=\Omega$. In the former case, we assume $\Omega\Subset \RR^n$ is a bounded region with an acoustically transparent boundary and we call this the {\em transparent-boundary} setting. In the later case, $U=\Omega$ is taken to be an unbounded set with boundary, provided with a smooth compactly supported and nonnegative (and non-null) boundary function $\lambda$, and dissipative (Robin) boundary conditions:
\begin{equation}\label{PAT_bc}
Bu:= \partial_\nu u + \lambda \partial_tu = 0,\quad\text{on }[0,T]\times\partial\Omega.
\end{equation}
For the initial boundary value system \eqref{PAT}-\eqref{PAT_bc}, we say $\Omega$ has a {\em partially reflecting boundary}.

The forward well-posedness of this system follows from the discussion in \cite{BLR} and finite propagation speed, which allows us to restrict the problem to a bounded region. Indeed, for $\ff\in H^1(U)\times L^2(U)$ compactly supported inside $U$, solutions $\uu(t) = (u(t),u_t(t))$ belong to the energy space $H^1(U)\times L^2(U)$ for all $t\in[0,T]$. Furthermore, due to $U$ being unbounded and $\ff$ compactly supported, time-independent functions are not solutions to \eqref{PAT}. 

Let's now define the (partial data) observation map. For this, let $\Gamma$ be a relatively open and strictly convex (for the sound speed metric $c^{-2}(x)dx^2$) subset of the boundary. For a fixed positive and continuous function $s(x)$ in $\Gamma$ we call the {\em observation set} to 
\begin{equation}\label{def:obs_set}
\mathcal{G}:=\{(t,x): x\in\Gamma,\;0<t<s(x)\}.
\end{equation}
The {\em observation map} is the operator mapping initial sources $\ff$ to Dirichlet boundary measurements 
\begin{equation}\label{def:obs_map}
\Lambda_a \ff := u|_{\mathcal{G}}.
\end{equation}
It is a common assumption to take $s(x) \equiv T$, in which case $T$ is called the {\em observation time} and then $\mathcal{G} = (0,T)\times\Gamma$. For simplicity, we will restrict our analysis to this case. We say we have {\em complete data} whenever $\Gamma = \partial\Omega$, and {\em partial data} otherwise. 
In addition, in the case of $U=\Omega$ in \eqref{PAT}, we fix $\Gamma := \{\lambda>0\}$, and for a given $\lambda_0>0$ we also define $\Gamma_0:=\{\lambda>\lambda_0\}$. 

The reader might have noticed that we are interested in two different geometries for the photoacoustic problem. As a matter of fact, we will address the complete and partial data problems for the case of $U=\RR^n$ and $\Omega$ bounded with acoustically transparent boundary, which will be referred to as the setting of {\em bounded geometry with transparent-boundary}; and secondly, we will consider the partial data problem for an unbounded $U=\Omega$ with boundary, denoted as the setting of {\em semi-bounded geometry with partially-reflecting boundary}.\\

The inverse photoacoustic tomography problem in acoustically damping media consists in determining an initial source of the form $\ff=(f,-af)$ from the knowledge of $\Lambda_a \ff$. See  \cite{H}. As we mentioned above, the main differences with the classical formulation of the problem is the presence of the damping coefficient $a(x)$ which breaks the time-symmetry of the unattenuated case by impeding a $C^1$-extension to negative times. Some already classical references on the theory of the unattenuated photoacoustic problem are \cite{FPR, SU,SU1} and the survey \cite{SUsurvey}; regarding the damping media case previous work include \cite{H,P,AP,HaN}. The inverse problems for partially and perfectly reflecting boundaries are addressed  in \cite{AM,SY,NK,CO}. Our particular choice of geometry is then motivated by \cite{AM} and \cite{CO}, where partially reflecting boundaries are considered in the former, while the semi-bounded geometry is employed in the latter article. We refer the reader to those papers and reference therein for details on the implementations of these photoacoustic settings in the practice.

In the context of damping media, in both \cite{H,HaN}, the issue of stability in the partial data setting is investigated. 
It is stated there that under a visibility condition for singularities of the initial source (see definition below) Lipschitz stability of the inverse problem holds as long as $\Lambda_a$ is an injective map. Nevertheless, the injectivity of the partial data measurement operator is not proven. This work intends to bridge the gap and complete the treatment of the damped PAT problem by establishing the injectivity and stability for the partial data case. 

Naturally, at the center of the derivation of stability (and uniqueness) lies the microlocal assumption of observability of singularities generated at the initial time. This is the visibility condition for singularities, or equivalently, the {\em geometric control condition} of \cite{BLR}. We state below a particular instance of such condition in the case of a strictly convex observation surface with a transparent boundary.
\begin{definition}\label{def:vis_cond1}
Let $T>0$, $\Omega_0\Subset\Omega$ a bounded subdomain, and $\Gamma\subseteq\partial\Omega$ strictly convex (for the metric $c^{-2}(x)dx^2$). 
We say the {\em visibility condition} is satisfied by $(\Gamma, T,\Omega_0)$ if for every unit speed geodesic $\gamma(t)$, with $\gamma(0)\in\bar{\Omega}_0$, there exists $|t|<T$ such that $\gamma(t)\in\Gamma$.
\end{definition}

For the case of a partially-reflecting boundary, the visibility condition needs to be adapted to account for the reflections of the singularities at the boundary. We consider instead broken-geodesics whose trajectories and reflections near the boundary are determined by the laws of geometric optics. The condition in this case reads:

\begin{definition}\label{def:vis_cond2}
Let $T>0$, $\Omega_0\Subset\Omega$ a bounded subdomain, and $\Gamma\subset\partial\Omega$ strictly convex. 
We say the {\em visibility condition} is satisfied by $(\Gamma, T,\Omega_0)$ if for every unit speed broken-geodesic $\gamma(t)$, with $\gamma(0)\in\bar{\Omega}_0$, there exists $|t|<T$ such that $\gamma(t)\in\Gamma$.
\end{definition}

We will refer to both of the previous definitions as the visibility condition, and the specific choice will become clear from the assumptions on the geometry of the problem.\\

The main result of this paper is the injectivity and stability of $\Lambda_a$ under the visibility condition. The definition of the energy space $\mathcal{H}(\Omega_0)$ is given in the next Section \ref{sec:preliminaries}.

\begin{theorem}\label{thm:uniq_stab}
Let's assume the visibility condition holds for some $T>0$, $\Omega_0\Subset\Omega$ with smooth boundary, and $\Gamma$ strictly convex. Then, the observation map $\Lambda_a$ is injective, this is, whenever $\Lambda_a\ff = 0$ for some $\ff \in \cH(\Omega_0)$, then $\ff\equiv 0$. Moreover, the next stability estimate holds:
$$
\|\ff\|_{\cH(\Omega_0)}\leq C\|\Lambda_a\ff\|_{H^1((0,T)\times \Gamma)},\quad\forall\; \ff\in \cH(\Omega_0).
$$
In the particular case of $\Omega$ bounded with transparent-boundary, one can set $\Omega_0 = \Omega$ provided the visibility condition holds throughout the whole domain.
\end{theorem}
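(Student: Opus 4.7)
The plan is to follow the three-step Bardos--Lebeau--Rauch strategy of \cite{BLR}: first establish a microlocal stability estimate with a compact remainder, second exploit the time-independence of the coefficients to reduce injectivity to elliptic unique continuation via Riesz's lemma, and finally upgrade this to the clean stability inequality via a compactness--uniqueness argument.

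\emph{Step 1 (Microlocal estimate).} Since $a(x)\partial_t$ is a lower-order perturbation, the principal symbol and the (broken) bicharacteristic flow of $\square_a$ coincide with those of the free wave operator $\partial_t^2-c^2\Delta$. The visibility condition forces every null bicharacteristic issued from $T^*\Omega_0\setminus 0$ to reach $T^*\bigl((0,T)\times\Gamma\bigr)$ in finitely many legs, and strict convexity of $\Gamma$ excludes grazing/gliding reflections (with the Robin condition \eqref{PAT_bc} dictating the reflection law in the partially reflecting case). Standard propagation of singularities then yields a microlocal inequality of the form
\[
\|\ff\|_{\cH(\Omega_0)}\leq C\|\Lambda_a\ff\|_{H^1((0,T)\times\Gamma)}+C\|\ff\|_{\mathcal{K}(\Omega_0)}\qquad\forall\,\ff\in\cH(\Omega_0),
\]
where $\mathcal{K}(\Omega_0)$ denotes the analogue of $\cH(\Omega_0)$ with one derivative less of regularity. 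Because supports stay trapped in $\bar\Omega_0$, the inclusion $\cH(\Omega_0)\hookrightarrow\mathcal{K}(\Omega_0)$ is compact.

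\emph{Step 2 (Injectivity).} Applied to $\ff\in N:=\ker\Lambda_a\cap\cH(\Omega_0)$, the above estimate gives $\|\ff\|_{\cH(\Omega_0)}\leq C\|\ff\|_{\mathcal{K}(\Omega_0)}$, so by Riesz's lemma $\dim N<\infty$. Because the coefficients of $\square_a$ and the boundary operator $B$ are time-independent, the infinitesimal generator $A$ of the associated $C_0$-semigroup enjoys the following invariance: if $\ff\in N\cap D(A)$, the solution issued from $A\ff$ is $\partial_t\uu$, whose trace on $\mathcal{G}$ is $\partial_t(u|_\mathcal{G})=0$, so $A\ff\in N$. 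Finite-dimensionality forces $N\subset D(A)$, so $A$ restricts to a linear endomorphism of $N$ and admits an eigenvector $\ff=(f_1,\lambda f_1)$, $\lambda\in\CC$. The corresponding solution is $u(t,x)=e^{\lambda t}f_1(x)$, with $f_1\in H^1(U)$, $\supp f_1\subset\bar\Omega_0$, solving the elliptic equation
\[
c^2\Delta f_1-(\lambda^2+\lambda a)f_1=0\quad\text{in }U.
\]
Since $\bar\Omega_0\Subset U$, the function $f_1$ vanishes on the nonempty open set $U\setminus\bar\Omega_0$, and strong unique continuation for second-order elliptic operators forces $f_1\equiv 0$. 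Hence $N=\{0\}$, which is the desired injectivity.

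\emph{Step 3 (Stability) and main obstacle.} A standard compactness--uniqueness argument now closes the proof: if the estimate failed one could pick $\ff_n\in\cH(\Omega_0)$ with $\|\ff_n\|_{\cH(\Omega_0)}=1$ and $\|\Lambda_a\ff_n\|_{H^1((0,T)\times\Gamma)}\to 0$; compactness extracts a subsequence converging in $\mathcal{K}(\Omega_0)$, the microlocal inequality applied to $\ff_n-\ff_m$ promotes this to convergence in $\cH(\Omega_0)$, and the limit is a unit-norm element of $\ker\Lambda_a$, contradicting Step~2. The real technical load sits in Step~1, especially for the semi-bounded, partially reflecting geometry, where propagation of singularities must follow broken bicharacteristics reflecting at $\{\lambda>0\}$ under the Robin law \eqref{PAT_bc}; the subsequent Riesz/elliptic-UCP/compactness--uniqueness machinery is then structural and comparatively routine.
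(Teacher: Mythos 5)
Your proposal follows the same Bardos--Lebeau--Rauch route as the paper: a microlocal observability estimate with a compact remainder, Riesz's lemma to get a finite-dimensional kernel, time-independence of the coefficients to produce an eigenfunction of $\partial_t$ solving an elliptic equation killed by unique continuation, and a compactness--uniqueness upgrade to the clean stability bound. Two points deserve comment. First, the one step whose stated justification does not go through is ``finite-dimensionality forces $N\subset D(A)$'': finite dimensionality of the invisible subspace by itself says nothing about membership in the domain of the generator. The paper closes this by reapplying the microlocal estimate at the higher index $s=2$ to bootstrap invisible data to $H^2(\Omega_0)\times H^1(\Omega_0)$, which yields $\partial_tu\in H^1((0,T)\times\Omega)$ and hence that $\partial_t$ maps $\mathcal{N}$ into itself; some such regularity argument is needed and is readily available from your Step 1 estimate, so the gap is fixable but real as written. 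Second, your Step 1 is asserted via ``standard propagation of singularities,'' whereas this is where the paper spends essentially all of its technical effort in the partially reflecting case: it constructs an explicit parametrix with reflections governed by the Robin law, checks that the reflection symbol $r$ and the Dirichlet trace symbol $1+r$ are elliptic in the hyperbolic region (using $\tau<0$ and strict convexity to avoid glancing rays), and verifies that the composed back-projection $\mathcal{A}^{mic}_a\chi\Lambda_a$ is an elliptic zero-order $\Psi$DO with principal symbol $\sum_j\chi_jq_jp_j\prod_{i<j}r_i^2$ under the visibility condition; you correctly identify this as the technical load but do not supply it. A minor difference in your favor: by posing the elliptic equation in all of $U$ and using that the eigenfunction vanishes on the open set $U\setminus\overline{\Omega}_0$, your unique continuation step handles the complete-data case $\Omega_0=\Omega$ (where $U=\RR^n$) uniformly, whereas the paper treats that case separately by determining the Neumann trace from the exterior problem and invoking uniqueness for the elliptic Cauchy problem.
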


It is not clear that this injectivity result is the optimal for the damped PAT problem. By comparing it with the unattenuated case, the smallest time needed for the injectivity property of Theorem \ref{thm:uniq_stab} to hold is in general larger than the optimal injectivity time for the unattenuated case. The former is given by $T_1(\Omega_0,\Gamma)$, which is the minimum time necessary so that for all $(x,\xi)\in S^*\overline{\Omega}_0$ (the unit cosphere bundle), at least one of $\gamma_{x,\xi}$ and $\gamma_{x,-\xi}$ reaches $\Gamma$ before time $T_1(\Omega_0,\Gamma)$; the latter time is defined as $T_0(\Omega_0,\Gamma) := \max\{\dist(x,\Gamma):x\in\Omega_0\}$. Then $T_1(\Omega_0,\Gamma)\geq T_0(\Omega_0,\Gamma)$, and Theorem \ref{thm:uniq_stab} holds for $T>T_1(\Omega_0,\Gamma)$. 

The optimal lower bound (i.e. $T>T_0(\Omega,\partial\Omega)$) was proven in \cite{AP} for the complete data problem (thus $\Omega$ bounded), assuming the existence of a particular foliation of $\Omega$ by smooth and strictly convex hypersurfaces. Here we do not assume a foliation condition and we are able to improve the lower bound obtained originally in \cite{H} for observation in the whole boundary, which is $2T_0(\Omega,\partial\Omega)$, thus requiring now $T>\min\{2T_0(\Omega,\partial\Omega), T_1(\Omega,\partial\Omega)\}$. 

Notice that, since $T_1$ depends on the geodesics of the sound speed metric while $T_0$ is defined in terms of $C^1$-curve segments, it could happen that $T_1$ is significantly larger that $T_0$ and in particular $T_1>2T_0$. An extreme case is when $(\Omega,c^{-2}dx^2)$ is a trapping manifold, hence $T_1=\infty$. Nevertheless, for some metrics (for instance those close to the Euclidean one) $T_1$ could still be smaller than $2T_0$. This occurs for example if $\Omega=B(r,0)$, the ball fo radius $r>0$ and center at the origin, and $c(x)\approx 1$. Then, $T_0\approx T_1\approx r$, and our result provides a significant improvement on the lower bound for $T$ guaranteeing uniqueness of the inverse problem.\\


Recently, Stefanov \cite{St} demonstrated that the foliation condition allows the construction of a pseudo-convex function from where it is possible to deduce conditional H\"older stability estimates for the partial data problem, even in the case of initial conditions with support not completely contained inside the visible region ---this latter region defined as the subdomain satisfying the geometric control condition. However, the portion of the domain that is H\"older stably recoverable satisfies (a-posteriori) that singularities issued from there are visible in the measurements, which is a consequence of the foliation.

\section{Preliminaries and organization of the paper}\label{sec:preliminaries}
Let's introduce some functional spaces that will be relevant in the subsequent analysis. 
We denote by $H_D(\Omega)$ the completion of $C^\infty_0(\Omega)$ under the Dirichlet norm
$$\|f\|_{H_D(\Omega)}^2 := \int_{\Omega}|\nabla f|^2dx,$$
which is topologically equivalent to $H^1_0(\Omega)$. We similarly define $H_D(\Omega_0)$ for some bounded subdomain $\Omega_0\Subset\Omega$ with smooth boundary, which is identified as 
the subspace of $H_D(\Omega)$ containing all functions that vanish outside $\Omega_0$.

Throughout the paper the spaces $L^2(\Omega)$ and $L^2(\Omega_0)$ will stand for the set of all square integrable  functions for the sound speed measure $c^{-2}(x)dx$ in their respective sets. We then define the energy space of initial sources as
$$
\mathcal{H}(\Omega_0):= H_D(\Omega_0)\times L^2(\Omega_0).
$$
Similarly as the case of $H_D(\Omega)$, the energy space $\mathcal{H}(\Omega_0)$ is topologically equivalent to $H^1_0(\Omega_0)\times L^2(\Omega_0)$. 

For a vector valued function $\ff = (f_1,f_2)$ we set $\Pi_j\ff := f_j$, $j=1,2$, the projections to the first and second components, respectively. We also define the orthogonal projection operator $\Pi_{\Omega_0}$ as the map that assigns $\Pi_{\Omega_0}f:=g$, where
$$
\Delta g = \Delta f\quad\text{in }\Omega_0,\quad g|_{\partial\Omega} = 0.
$$
Hence, $\Pi_{\Omega_0}:H_D(\Omega)\to H_D(\Omega_0)$ continuously. 
More details on this can be found in \cite[\S2.2]{SY}. Finally, for a function $\ff = (f_1,f_2)\in \mathcal{H}(\Omega)$ we write ${\bf\Pi}_{\Omega_0}\ff := (\Pi_{\Omega_0}f_1,\mathds{1}_{\Omega_0}f_2)\in \mathcal{H}(\Omega_0)$, where $\mathds{1}_{\Omega_0}$ stands for the characteristic function of ${\Omega_0}$.

Recall that denoting $\uu = (u,u_t)$ it is possible to write \eqref{PAT} in the form
$$\uu_t = \text{\bf P}_a\uu\quad \text{for}\quad\text{\bf P}_a := \left(\begin{matrix} 0&I\\c^2\Delta& -a\end{matrix}\right),$$
with the operator $\text{\bf P}_a$ (augmented with Robin boundary conditions) defining a strongly continuous semigroup $e^{t{\bf P}_a}$ which given an initial data $\ff = (f_1,f_2)$ it assigns the solution to the previous system $\uu(t) = e^{t\text{\bf P}_a}\ff$ at time $t>0$ (see, for instance, \cite{Fu}). 

The energy functional associated to the damped wave equation in a region $\Omega$ and time $t>0$ is given by
$$
E_{\Omega}(\uu(t)):=\|\uu(t)\|^2_{\cH(\Omega)} = \|u(t)\|^2_{H_D(\Omega)} + \|u_t(t)\|^2_{L^2(\Omega)},
$$
while its extended energy functional is defined as
$$
\mathcal{E}_{\Omega,t}(\uu) := E_{\Omega}(\uu(t)) + 2\int^t_0\int_{\Omega}ac^{-2}|u_t(s)|^2dsdx.
$$
This second energy functional takes into account the portion of the energy lost due to inner attenuation. In a closed system (i.e.  without dissipation of energy through the boundary of $\Omega$) this quantity is conserved. In the case of system \eqref{PAT}-\eqref{PAT_bc}, it is not hard to verify that the extended energy functional decreases in time as a consequence of the dissipative boundary conditions.

The rest of the paper is organized as follows. We start by revisiting the time-reversal operators introduced in \cite{H} and \cite{P} for the damped wave equation. 
We also present a standard geometric optic construction of microlocal approximate solutions (parametrix), and as a consequence of such analysis, we obtain continuity estimates for the observation map and the ellipticity of a pseudodifferential operator involving a microlocal back-propagation of the boundary data. All of this is later used to prove our main result, Theorem \ref{thm:uniq_stab}, whose proof is divided into two parts corresponding respectively to the bounded geometry case in Section \ref{subsec:uniq_bdd}, and the unbounded geometry in Section \ref{subsec:uniq_unbdd}. In Sections \ref{subsec:rec_bdd} and \ref{subsec:rec_unbdd}, the approximate solutions serve us to analyze the iterative time-reversal-based reconstruction of \cite{P}, for the respective cases of bounded geometries with transparent-boundary and for semi-bounded geometries with partially reflecting boundary. We conclude the article with Section \ref{sec:numerics}, where we illustrate the reconstruction procedure in the semi-bounded geometry setting with numerical simulations.

\section{Time-reversal and geometric optics}\label{sec:TR}

In this section we review two time-reversed systems available in the literature in the context of acoustically damping media. They generate iterative reconstruction procedures based on Neumann series. We start with the direct extension of the original sharp time-reversal method of \cite{SU}, and whose respective convergence of the associated Neumann series was discussed in \cite{H} under a smallness condition assumed over the damping coefficient. The second method, introduced in \cite{P} and later extended in \cite{AP} to integro-differential attenuations, provides a modification of the former and gives the convergence of another Neumann series, this time independently of the size of the attenuation. As expected, the speed of convergence decreases with the amplitude of the damping coefficient. Although the aforementioned time-reversal systems where originally defined in a free space setting, our presentation below is given in a more general fashion to include the case of an unbounded domain with a smooth partially reflecting boundary, this is, involving dissipative boundary conditions of Robin type (see \cite{AM} for the case of complete data in a bounded geometry).

\subsection{Time-reversal operators}
Let's set $\ff = (f_1,f_2)\in \mathcal{H}(\Omega_0)$, and let $u$ be the solution to the initial boundary value problem \eqref{PAT}; augmented with boundary condition \eqref{PAT_bc} when $U=\Omega$.
We recall the definition $\Lambda_a\ff := u|_{(0,T)\times\Gamma}$ for the measurement operator
which according to \cite[Theorem 5.5]{BLR} and trace inequalities \cite{Evans} (see also \cite{T2} for sharper results) it maps continuously
$$\Lambda_a:\mathcal{H}(\Omega_0)\longrightarrow C([0,T];H^{1/2}(\Gamma)).$$ 
Other continuity results can be obtained by studying $\Lambda_a$ in the context of {\em Fourier Integral Operators} (FIO's) in a similar fashion as what we do in the next subsection.

We remark that for $U=\Omega$ unbounded, finite propagation speed allows us to restrict \eqref{PAT} to a sufficiently large bounded domain $\Omega'\subset\Omega$ with smooth boundary and such that $\Gamma\subset\partial\Omega'\cap\partial\Omega$. If $\partial\Omega$ is partially reflecting then we impose $\partial_\nu u=0$ on $\partial\Omega'\backslash\Gamma$. 

For the boundary observations $h=\Lambda_a\ff$ we define the (standard or non-dissipative) time-reversal operator as follows. Let's set
$$
\begin{aligned}
&B'(v,h):=v-h &&\text{ for transparent-boundaries,}\quad\text{and}\\
&B'(v,h):=\partial_\nu v + \lambda\partial_th &&\text{ for partially reflecting-boundaries.}
\end{aligned}
$$
Then, the time-reversal operator $\sfA_a$ is the map that takes $h \to (v,v_t)|_{t=0}$ with $v$ solution to the backward system
\begin{equation}\label{Homan_TR}
\left\{\begin{aligned}
(\partial^2_t + a\partial_t- c^2\Delta)v &= 0,&& \text{in }(0,T)\times\Omega,\\
B'(v,h) &= 0 ,&&\text{on }(0,T)\times\partial\Omega,\\
(v,v_t)|_{t=T}&= \left(\mathcal{P}(h(T)),0\right),&&\text{in }\Omega.\\
\end{aligned}
\right.
\end{equation}
Here we set
$$
\mathcal{P}(h(T)) :=\left\{ 
\begin{array}{ll}
\phi,&\text{ solution to }\Delta\phi =0 \text{ in $\Omega$ and }\phi|_{\partial\Omega} = h(T),\\
&\text{ for transparent boundaries,}\\
\vspace{-.5em}\\
0,& \text{ for partially-reflecting boundaries.}
\end{array}
\right.
$$
The respective error operators are defined as $\sfK_a := {\bf Id}_{\Omega_0} - {\bf \Pi}_{\Omega_0}\sfA_a\Lambda_a$ and characterized by the identity $\sfK_a\ff = {\bf\Pi}_{\Omega_0}(w(0),w_t(0))$, for $w$ solution to the backward homogeneous final boundary value problem
\begin{equation}\label{Homan_error}
\left\{\begin{aligned}
(\partial^2_t + a\partial_t- c^2\Delta)w &= 0,&& \text{in }(0,T)\times\Omega,\\
B'(w,0) &= 0 ,&&\text{on }(0,T)\times\partial\Omega,\\
(v,v_t)|_{t=T}&=(u,u_t)|_{t=T}-(\mathcal{P}(h(T)),0),&&\text{in }\Omega.\\
\end{aligned}\right.
\end{equation}
In the context of partially reflecting boundaries, $\sfA_a$ (and consequently $\sfK_a$) follows from directly adapting the time-reversal scheme of \cite{H} to account for Robin conditions. \\

We can similarly adapt the attenuating time-reversal strategy of \cite{P} as follows.
We set ${\AA}_ah:=(v,v_t)|_{t=0}$ for $v$ solution to
\begin{equation}\label{Palacios_TR}
\left\{\begin{aligned}
(\partial^2_t - a\partial_t- c^2\Delta)v &= 0,&& \text{in }(0,T)\times\Omega,\\
B'(v,h) &=0 ,&&\text{on }(0,T)\times\partial\Omega,\\
(v,v_t)|_{t=T}&=(\mathcal{P}(h(T)),0),&&\text{in }\Omega,\\
\end{aligned}\right.
\end{equation}
where the difference with respect to \eqref{Homan_TR} relies on the sign of the damping coefficient. Since this system is solved backward in time, this is, from $t=T$ to $t=0$, the negative sign in front of the damping coefficients makes the problem to be a dissipative one. 
The respective error operator $\KK_a := {\bf Id}_{\Omega_0} - {\bf \Pi}_{\Omega_0}\AA_a\Lambda_a$ is characterized as $\KK_a\ff ={\bf \Pi}_{\Omega_0}(w(0),w_t(0))$, where this time $w$ is a solution to the non-homogeneous final boundary value problem:
\begin{equation}\label{Palacios_error}
\left\{\begin{aligned}
(\partial^2_t - c^2\Delta)w &= -a(u_t+v_t),&&\text{in }(0,T)\times\Omega,\\
B'(w,0) &= 0 ,&&\text{on }(0,T)\times\partial\Omega,\\
(w,w_t)|_{t=T}&=(u,u_t)|_{t=T}-(\mathcal{P}(h(T)),0),&&\text{in }\Omega.\\
\end{aligned}\right.
\end{equation}
The non-homogeneous source plays an important role in the analysis of the reconstruction algorithm via Neumann series.

\subsection{Geometric optics solutions}\label{subsec:GO}
In this section we seek to 
construct a microlocal (up to smoothing error) back-projection operator $\mathcal{A}^{mic}_a$ satisfying that, for a suitable cut-off function $\chi$, $\mathcal{A}^{mic}_a\chi \Lambda_a$ is an elliptic pseudo-differential operator. The ellipticity of the composition will be essential to prove uniqueness and stability of the observation map.

The construction of $\mathcal{A}^{mic}_a$ consists in finding a forward parametrix for the damped wave operator, analyzing the reflection of waves at the dissipative boundary, and finally, microlocally back-propagate the singularities observed from the boundary. We guarantee that all the singularities emanating from $\ff$ reach the observation region (and are subsequently back-propagated) by imposing the visibility condition defined in Section \ref{sec:IP}.

%

Parametrix constructions in photoacoustic tomography are now a standard procedure \cite{SU,SU1,SUsurvey, H,P,SY}, nonetheless, each observation setting brings its own difficulties and details that needs to be carefully addressed. We will focus only on the partially-reflecting boundary setting since the case of transparent boundaries has been studied previously in the literature (see, for instance, \cite{SU,H}).
 

\subsubsection{Forward parametrix (no boundaries).}
Let $\ff=(f_1,f_2)$ be supported in the compact set $\overline{\Omega}_0\subset\Omega$. 
Microlocalizing it, we can assume without loss of generality that its wavefront set is contained in a conic neighborhood of some $(x_0,\xi_0)\in T^*\overline{\Omega}_0$. We look for a solution $u$ to the damped wave equation $\square_a u=0$ in the form
\begin{equation}\label{ansatz}
u(t,x) = (2\pi)^{-n}\sum_{\sigma=\pm}\int e^{i\phi^\sigma(t,x,\xi)}\big(A^\sigma_1(t,x,\xi)\hat{f}_1(\xi) + |\xi|^{-1}A^\sigma_2(t,x,\xi)\hat{f}_2(\xi)\big)d\xi,
\end{equation}
with $\hat{f}(\xi) = \int f(x)e^{-i\xi\cdot x}dx$ the Fourier transform of the function $f(x)$. 

We apply the damped wave operator to \eqref{ansatz} which yields
$$\square_a u = (2\pi)^{-n}\sum_{\sigma=\pm}\int e^{i\phi^\sigma}\big([I^\sigma_{1,0} + I^\sigma_{1,1} + I^\sigma_{1,2}]\hat{f}_1 + |\xi|^{-1}[I^\sigma_{2,0} + I^\sigma_{2,1} + I^\sigma_{2,2}]\hat{f}_2\big)d\xi,$$
where for $j=1,2$ and $\sigma=\pm$,
\begin{align*}
I^\sigma_{j,2} &= - A_j^\sigma((\partial_t\phi^\sigma)^2 - c^2|\nabla_y\phi^\sigma|^2);\\
I^\sigma_{j,1} &= 2i[(\partial_t\phi^\sigma)(\partial_tA^\sigma_j) - c^2\nabla_y\phi^\sigma\cdot\nabla_yA^\sigma_j] + iA^\sigma_j\square_a\phi^\sigma;\\
I^\sigma_{j,0} &= \square_aA^\sigma_j.
\end{align*}
We assume the phase $\phi_\sigma$ is homogeneous of order 1 in $\xi$, while the amplitude functions are assumed to be classical in the sense they are given by asymptotic expansions $A^\sigma_j(t,x,\xi)\sim \sum_{k\geq 0}A^\sigma_{j,k}(t,x,\xi)$, for  $j=1,2,\;\sigma=\pm$, and with $A^\sigma_{j,k}$ smooth functions homogeneous of degree $-k$ in $\xi$. We would like to choose the amplitudes $A^\sigma_{j,k}$ and phase functions $\phi^\sigma$ so that $I^\sigma_{j,0}+I^\sigma_{j,1}+I^\sigma_{j,2}$ vanishes for $j=1,2$ and $\sigma=\pm$.

In order to make $I^\sigma_{j,2}=0$ we solve the eikonal equations 
\begin{align}\label{eikonal}
\left\{\begin{matrix} \mp\partial_t\phi^\pm &=& c|\nabla_x\phi^\pm|\\ \phi^\pm|_{t=0} &=& x\cdot\xi,\end{matrix}\right.
\end{align}
where the initial conditions are chosen so that we recover $\ff$ when taking $t=0$ in \eqref{ansatz}. On the other hand, we obtain $I^\sigma_{j,1}+I^\sigma_{j,0}=0$ by solving a recursive system of equations. For this, we define the vector field
\begin{equation}\label{transport_op}
X^\sigma := 2(\partial_t\phi^\sigma)\partial_t - 2c^2\nabla_x\phi^\sigma\cdot\nabla_x.
\end{equation}
The coefficients of the amplitude functions $A^\sigma_{j}$ must then satisfy 
\begin{equation}\label{transport}
X^\sigma A^\sigma_{j,0} + A^\sigma_{j,0}\square_a\phi^\sigma = 0,\quad\text{and}\quad X^\sigma A^\sigma_{j,k} + A^\sigma_{j,k}\square_a\phi^\sigma =   i \square_aA^\sigma_{j,k-1},
\end{equation}
for all $k\geq 1$. The initial condition for this system of equations are obtained by imposing $(u,u_t)|_{t=0}=\ff$. More explicitly, we need 
$$f_1(x) = (2\pi)^{-n}\int e^{ix\cdot\eta}\big((A^+_1 + A^-_1)\big|_{t=0}\hat{f}_1(\eta) + |\eta|^{-1}(A^+_2 + A^-_2)\big|_{t=0}\hat{f}_2(\eta) \big)d\eta,$$
from which we extract the initial conditions
\begin{equation}\label{A_eq1}
A^+_1 + A^-_1 = 1\quad\text{and}\quad A^+_2 + A^-_2=0,\quad \text{at}\quad t=0;
\end{equation}
and in an analogous fashion,
\begin{align*}
f_2(x) &= (2\pi)^{-n}\int e^{ix\cdot\eta}\big([ic|\eta|(-A^+_1 + A^-_1) + \partial_t(A^+_1 + A^-_1)]\big|_{t=0}\hat{f}_1(\eta)\\
&\hspace{7em}+[ic(-A^+_2 + A^-_2) +|\eta|^{-1}\partial_t(A^+_2 + A^-_2)]\big|_{t=0}\hat{f}_2(\eta) \big)d\eta
\end{align*}
yields the equalities (at $t=0$)
\begin{equation}\label{A_eq2}
\begin{aligned}
ic|\eta|(-A^+_1 + A^-_1) + \partial_t(A^+_1 + A^-_1) &= 0,\\
ic(-A^+_2 + A^-_2) +|\eta|^{-1}\partial_t(A^+_2 + A^-_2) &= 1.
\end{aligned}
\end{equation}
The system of linear equation \eqref{A_eq1}-\eqref{A_eq2} is solved iteratively as follows: for $t=0$ we set
\begin{equation}\label{IC_amplitud1}
\begin{array}{ll} A^+_{1,0} + A^-_{1,0} = 1,& A^+_{1,0} - A^-_{1,0} = 0\\
 A^+_{1,k} + A^-_{1,k}=0,& A^+_{1,k} - A^-_{1,k} = ic^{-1}|\eta|^{-1}\partial_t(A^+_{1,k-1} + A^-_{1,k-1}),\quad k\geq1,  \end{array}
\end{equation}
\begin{equation}\label{IC_amplitud2}
\begin{array}{ll} 
A^+_{2,0} + A^-_{2,0} = 0,& A^+_{2,0} - A^-_{2,0} = i/c\\
A^+_{2,k} + A^-_{2,k} =0,&A^+_{2,k} - A^-_{2,k} = ic^{-1}|\eta|^{-1}\partial_t(A^+_{2,k-1} + A^-_{2,k-1}),\quad k\geq1.
\end{array}
\end{equation}
The transport equations \eqref{transport}, with initial conditions \eqref{IC_amplitud1}-\eqref{IC_amplitud2}, can be solved on integral curves of $X^\sigma$ as longs as the eikonal equation \eqref{eikonal} is solvable. This is only possible, in general, for a small interval of time. To continue with this construction in some positive and small interval $(t_1,t_2)$ one has to solve the same equations for $\phi^\sigma$ and $A^\sigma_{j}$ but with initial conditions this time at $t=t_1>0$, where those conditions come from the previous step. A global solution in $[0,T]$ is then constructed iterating this process a finite number of times. 

Notice also that the approximate solution we constructed is indeed accurate up to a smooth error, this is, in this case $u$ and the real solution differ from each other by a smoothing operator acting on $\ff$, in particular, compact.

If we assume the boundary of $\Omega$ is acoustically transparent we can compute $u$ across $\partial\Omega$. We define the trace operator $F:\ff \to u|_{\RR\times\partial\Omega}$, which is given by the restriction of \eqref{ansatz} to the boundary 
and can be written as $F\ff=F^+\ff+F^-\ff$ with $F^\pm$ Fourier Integral Operators (FIO's) with canonical relations given by the graph of the respective diffeomorphisms
\cite{SUsurvey,SY} 
$$
C_\pm\;:\;(x,\xi)\;\longmapsto\; \big(\pm\tau_{\pm}(x,\xi/|\xi|),\;\gamma_{x,\xi}(\tau_{\pm}(x,\xi/|\xi|))\;,\;\mp|\xi|\;,\;\dot{\gamma}'_{x,\xi}(\tau_{\pm}(x,\xi/|\xi|))\big).
$$
In the case of $c\not\equiv 1$ on $\partial\Omega$  the covector norm $|\cdot|$ should be replaced by the one associated to the metric $c^{-2}(x)dx^2$. 
In addition, $\tau$ is the exit time defined as $\tau_{\pm}:=\inf\{\pm t\geq 0:\gamma_{x,\xi}(t)\in\partial\Omega\}$, and the prime in $\dot{\gamma}'_{x,\xi}$ stands for the projection onto $T^*\partial\Omega$. $F^+$ is elliptic in conic neighborhoods of those $(x,\xi)\in T^*\bar{\Omega}_0$ whose geodesics reach the boundary, thus $\tau_+$ is finite; and analogously for $F^-$ and $\tau_-$. 

By writing instead $F\ff = F_{1} f_1 + F_2 f_2$, it turns out that $F_1$ and $F_2$ are  FIO's of order 0 and $-1$ respectively, both with canonical relations of graph type as above and elliptic in $\Omega_0$ under the visibility condition \ref{def:vis_cond1}. In particular we have \cite{HoIV}
\begin{equation}\label{FIO_reg}
\|F\ff\|_{H^{s}(\RR\times\partial\Omega)}\leq C\|\ff\|_{H^{s}(\Omega_0)\times H^{s-1}(\Omega_0)},\quad s\in\RR.
\end{equation}

\subsubsection{Parametrix at the boundary: reflection of waves.}\label{subsec:bdry_para}
So far, we have obtained a solution $u$ (up to a smooth error) for the damped wave equation in free-space. In order to impose the boundary conditions we need to modify the construction accordingly near the boundary, hence, we switch our attention to a different parametrix construction. In this case we look for a solution to \eqref{PAT}-\eqref{PAT_bc} of the form $u^+=u^+_{in} + u^+_{\text{\em ref}}$. The superscript $+$ here means that we are only considering solutions with wavefront set lying in a neighborhood of a solution $\tau + c(x)|\xi|=0$ to the characteristic equation. In other words, we isolate the portion of the wave field generated by $\ff$ which is associated to the boundary trace $F^+\ff$. The subscripts {\em in} and {\em ref} are used to differentiate respectively between the incoming (to the boundary) part of the wave field and the reflected one. 
 The computations below carry out in the same way for $u^-$ (thus, associated to $\tau - c(x)|\xi|=0$ and boundary trace $F^-\ff$). In order to alleviate the notation we will omit the superscript $+$.

We are only interested in a microlocal representation of the solution to \eqref{PAT}-\eqref{PAT_bc}, then, we only construct a parametrix of $u$ near the the boundary  and with an error given by a compact operator. An equality modulus a compact error will be denoted by $\cong$.

Let's consider boundary normal coordinates $x=(x',x^n)$ in a neighborhood of $x_1\in\partial\Omega$, where the interior of $\Omega$ and its boundary are respectively characterized by $x^n<0$ and $x^n=0$. We look for $u_{in}$ and $u_{\text{\em ref}}$ of the form
\begin{equation}\label{bdary_parametrix}
u_\sigma = (2\pi)^{-n}\int e^{i\varphi_\sigma(t,x,\tau,\eta)}b_\sigma(t,x,\tau,\eta)\hat{h}(\tau,\eta)d\tau d\eta,\quad\sigma = in,\;\text{\em ref},
\end{equation}
where $\hat{h}(\tau,\eta) = \int_{\RR\times\RR^{n-1}}e^{-i(t\tau + x'\cdot\eta)}h(t,x')dtdx'$ is the Fourier transform of a compactly supported distribution $h$ on $\RR\times\RR^{n-1}$. We further assume the wavefront set $WF(h)$ is contained in a small conic neighborhood of some $(t_1,x_1,\tau^1,\eta^1)\in T^*(\RR\times\partial\Omega)$ lying in the {\em conic hyperbolic region}: $c(x)|\eta|<-\tau$. Recall that by assumption $\tau^1=-c(x_1)|\xi^1|<0$, for a covector $\xi^1$ such that $(\xi^1)' = \eta^1$, and strict convexity guarantees that $|\eta^1|<|\xi^1|$.

By applying the wave operator $\square_a$ to the previous ansatz one easily verifies that the phase functions $\varphi_\sigma$, and the amplitudes $b_\sigma$, must satisfy respective eikonal and transport equations similar to \eqref{eikonal} and \eqref{transport}. The phase $\varphi_\sigma$ is assumed to be homogeneous of order 1 in $(\tau,\eta)$ and $b_\sigma$ a classical amplitude of order zero, this is, $b_\sigma\sim\sum_{k\geq 0}b^{(k)}_\sigma$ for smooth functions $b^{(k)}_\sigma$, homogeneous of degree $-k$ in $\tau$ and $\eta$. 

For the incoming wave field $u_{in}$ we take
\[
\varphi_{in}= t\tau+x'\cdot\eta,\quad b_{in}^{(0)} = 1
\quad \text{at}\quad x^n=0,
\]
thus from this choice we get $u_{in}|_{\RR\times\partial\Omega} \cong h$ near $(t_1,x_1)$ (the error in this case corresponds to a compact operator acting on $h$). Regarding the phase function $\varphi_{\text{\em ref}}$ we set
$$
\varphi_{\text{\em ref}} = t\tau+x'\cdot\eta,\quad \text{at}\quad x^n=0,
$$
while the boundary condition for the amplitude of $u_{\text{\em ref}}$ is deduce after imposing $\partial_\nu u + \lambda \partial_tu = 0$ at $\partial\Omega$. The previous phase functions differ from each other in the sign of the normal derivative, being positive for $\varphi_{in}$ and negative for $\varphi_{ref}$. Indeed, since in boundary normal coordinates the outward normal derivative takes the form $\partial_\nu = \frac{\partial}{\partial x^n}$, we have 
\[
\partial_{\nu}\varphi_{in}  = -\partial_{\nu}\varphi_{ref} = \sqrt{c^{-2}(x)\tau^2 - |\eta|^2} \quad\text{at}\quad x^n=0.
\]
The previous ansatz and the phase function $\varphi_{in}$ are used to define the incoming Dirichlet-to-Neumann map $N_{in}$ as the zero-th order $\Psi$DO
\[
N_{in}: u_{in}|_{\RR\times\partial\Omega}\mapsto \partial_\nu u_{in}|_{\RR\times\partial\Omega},
\]
with principal symbol $i\partial_{x^n}\varphi_{in} = i\sqrt{c^{-2}(x)\tau^2 - |\eta|^2}$; the outgoing Dirichlet-to-Neumann map $N_{out}$ is defined similarly in terms of $u_{ref}$ and with a principal symbol given by $i\partial_{x^n}\varphi_{ref} = -i\sqrt{c^{-2}(x)\tau^2 - |\eta|^2}$ (see \cite{SU1, SY} for more details). 

The amplitude function for the reflected wave field is chosen such that at $x^n=0$,
$$
\begin{aligned}
&i (b_{in}\partial_{x^n}\varphi_{in} + b_{\text{\em ref}}\partial_{x^n}\varphi_{\text{\em ref}}) + (\partial_{x^n}b_{in} + \partial_{x^n}b_{\text{\em ref}}) \\
&\hspace{2em}+ \lambda \big( i (b_{in}\partial_{t}\varphi_{in} + b_{\text{\em ref}}\partial_{t}\varphi_{\text{\em ref}}) + (\partial_{t}b_{in} + \partial_{t}b_{\text{\em ref}})\big) = 0.
\end{aligned}
$$
We are only interested in the previous equality at the highest level of homogeneity in $\tau$ and $\eta$, this is, we just require
\[
(b_{in}^{(0)}\partial_{x^n}\varphi_{in} + b_{\text{\em ref}}^{(0)}\partial_{x^n}\varphi_{\text{\em ref}})+ \lambda (b_{in}^{(0)}\partial_{t}\varphi_{in} + b_{\text{\em ref}}^{(0)}\partial_{t}\varphi_{\text{\em ref}})=0,
\]
but recalling the boundary values of $\varphi_{in},\varphi_{\text{\em ref}}$ and $b_{in}$, 
this is satisfied by choosing
\[
b_{\text{\em ref}}^{(0)} = \frac{\partial_{x^n}\varphi_{in} + \tau\lambda}{-\partial_{x^n}\varphi_{\text{\em ref}} - \tau\lambda}
\quad\text{at}\quad x^n=0.
\]
where one verifies that the denominator is non-vanishing since both $-\partial_{x^n}\varphi_{\text{\em ref}}$ and $-\tau$ are positive (the latter because we are following null-bicharacteristics with $\tau=-c(x)|\xi|$).

%
Taking $h= F\ff$ we conclude that the observation operator $\Lambda_a$ is microlocally approximated near the first reflection point by 
$$
\ff\mapsto u_{in}|_{\RR\times\partial\Omega}+u_{\text{\em ref}}|_{\RR\times\partial\Omega},
$$
with $u_{in}$ and $u_{\text{\em ref}}$ as above. We define the reflection operator
\[
R:h\mapsto  u_{\text{\em ref}}|_{\RR\times\partial\Omega},
\]
which is $\Psi$DO of order $0$ with principal symbol $r=b^{(0)}_{ref}(t,x',\tau,\xi')$, and we subsequently define the Dirichlet trace operator as $P:=Id + R$. The latter is a $\Psi$DO on $\partial\Omega$ with a principal symbol 
\[
p(t,x,\tau,\xi):=1+r(t,x,\tau,\xi) = 1 +  \frac{ \sqrt{c^{-2}(x)\tau^2 - |\eta|^2} + \tau\lambda}{ \sqrt{c^{-2}(x)\tau^2 - |\eta|^2} - \tau\lambda},
\]
thus positive (and therefore $P$ elliptic) for $0<c|\eta|<-\tau$. We remark that if $\lambda\not\equiv 0$, the previous operators are elliptic only for $\tau<0$, which relates to the fact that Robin boundary conditions are well-posed only forward in time. We also notice that when $\lambda = 0$ (this is, in the complement of the observation region $\Gamma$) the symbol $r=1$ ---this fact will be important in section \ref{sec:par_bp}.

We then characterize the (Dirichlet) boundary trace of the parametrix at the first reflection point as
\[
\ff\longmapsto PF\ff.
\]
This means $F$ propagates the information to the boundary (and a bit beyond), while $P$ is applied to take into account the partially-reflecting boundary and thus determines the trace of $u$ at the boundary.

Using a similar construction as in \eqref{bdary_parametrix} we define another FIO, $Gh = u_{ref}$, with $h$ microlocalized near a single $(t_1,x_1,\tau_1,\xi'_1)\in T^*(\RR\times\partial\Omega)$. We then solve \eqref{PAT} until the singularities hit $\partial\Omega$ again and slightly beyond, and restrict such solution to $\RR\times\partial\Omega$, thus, its wavefront set is contained in a neighborhood of some $(t_2,x_2,\tau_2,\xi_2')$ with $(t_2,x_2,\tau_2,\xi_2)$ belonging to the same null bicharacteristic as $(t_1,x_1,\tau_1,\xi_1)$, with $\xi_1$ a unit covector pointing inside $\Omega$ and whose projection to $T_{x_1}^*\partial\Omega$ is $\xi_1'$. It is also a zero order FIO with canonical relation corresponding to the graph of the diffeomorphism
\[
C_b : (t,x,\tau,\xi')\mapsto  \left(t+\tau_+(x,\xi/|\xi|), \gamma_{x,\xi}\left(\tau_+(x,\xi)\right), -|\xi|, \dot{\gamma}'_{x,\xi}\left(\tau_+(x,\xi)\right) \right),
\]
with $\xi = (\xi', -\sqrt{c(x)^2\tau^2 - |\xi'|^2})$ in boundary normal coordinates (see also \cite{SY}). 

So far we have only constructed a parametrix at the first reflection point. If several reflections of a singularity issued by $\ff$ occur in $(0,T)\times\partial\Omega$ then $u$ takes a more convoluted form that we explain next. 

Let $F$ and $G$ be the FIO's defined previously. For a singularity issued from $(x_0,\xi_0)$ at the initial time, let's assume that its associated broken geodesic is reflected $m$ times in the time interval $(0,T)$. For a microlocalized $\ff$ as above the boundary trace of the parametrix $u$ (solution to \eqref{PAT}) takes the form
\begin{equation}\label{u+}
\begin{aligned}
u|_{(0,T)\times\partial\Omega} = &\sum_{k=1}^mP\left(GR\right)^{k-1}F\ff.
\end{aligned}
\end{equation}
A simpler way of visualizing this is with the following diagram that show the sequence of boundary traces of $u_+$ near each reflection point:
\begin{equation}\label{bdry_traces}
\ff \longmapsto PF\ff\longmapsto PGRF\ff \longmapsto P(GR)^2F\ff \longmapsto ... \longmapsto P(GR)^{m-1}F\ff.
\end{equation}
We then have $\Lambda_a=\Lambda_a^++\Lambda_a^-$, with 
$\Lambda_a^\pm\ff \cong u^\pm|_{\RR\times\Gamma}$
(i.e., equal modulo a compact error) with $u^+|_{(0,T)\times\Gamma}$ and $u^-|_{(0,T)\times\Gamma}$ as in \eqref{u+}, which means that $\Lambda_a$ is an FIO of order $(0,-1)$ with canonical relation of graph type, and as a consequence (again from  \cite{HoIV}),
\begin{equation}\label{FIO_reg2}
\|\Lambda_a\ff\|_{H^{s}(\RR\times\Gamma)}\leq C\|\ff\|_{H^{s}(\Omega_0)\times H^{s-1}(\Omega_0)},\quad s\in\RR.
\end{equation}

\subsubsection{Microlocal back-projection of singularities.} \label{sec:par_bp}
As mentioned previously, in order to obtain the microlocal stability of the  measurement operator $\Lambda_a$ we need to be able to back-propagate all the singularities that reach our observation region. For this, we intend to back-propagate the boundary data by approximately solving the system
\[
\left\{\begin{aligned}
(\partial^2_t + a\partial_t- c^2\Delta)v &= 0,&& \text{in }(0,T)\times\Omega,\\
\partial_\nu v - \lambda\partial_t v&= -\lambda\partial_t h ,&&\text{on }(0,T)\times\partial\Omega,\\
(v,v_t)|_{t=T}&= (0,0),&&\text{in }\Omega\\
\end{aligned}\right.
\]
(a similar approach appears in \cite{NK}). We achieve this by constructing a back-projection FIO in terms of $G^{-1}$ and $F^{-1}$ ---remember that $F$ and $G$ are elliptic in neighborhoods of covectors lying in the conic hyperbolic region. 
Near the boundary the reflection of singularities is directed by the  Robin-type conditions we impose there, namely, $\partial_\nu v -\lambda \partial_tv=-\lambda \partial_th$, where we will eventually take $h=\chi \Lambda_a\ff$. 
This means that without loss of generality we assume $h$ decays to zero near $t=T$ since this can be obtained via multiplication with the smooth cut-off $\chi$.


At the principal level, the boundary condition takes the form
\[
N_{in}v_{in} + N_{out}v_{ref} - \lambda\partial_t(v_{in}+v_{ref})\cong -\lambda \partial_t h
\]
for a similar decomposition $v=v_{in}+v_{ref}$ and with $N_{in}$ and $N_{out}$ as defined above.
When back-propagating a singularity and due to the condition at $t=T$, the component $v_{ref}$ of the time-reversed wave field carries no singularity, therefore the boundary condition simplifies to $N_{in}v_{in} - \lambda\partial_tv_{in}\cong -\lambda \partial_t h$. An equivalent way of writing this is $v_{in}\cong-(N_{in}-\lambda\partial_t)^{-1}\lambda \partial_t h$, where we notice the operator $N_{in}-\lambda\partial_t$ has a principal symbol 
\[
i\left(\sqrt{c^{-2}(x)\tau^2-|\xi'|^2}-\tau\lambda\right),
\]
which is positive near the null-bicharacteristic (i.e. those satisfying $\tau=-c(x)|\xi|$ which by strict convexity intersect the boundary transversally). We can then invert it up to a $\Psi$DO of negative order (thus smoothing).
To alleviate the notation we write $Q= -(N_{in}-\lambda\partial_t)^{-1}\lambda \partial_t$, which is a zero-th order $\Psi$DO at the boundary and elliptic in the conic hyperbolic region. Its principal symbols is positive at those singularities reaching the observation region $\Gamma=\{\lambda>0\}$ and given by
\[
q(x,t;\xi',\tau) = \frac{-\tau\lambda(x)}{\sqrt{c^{-2}(x)\tau^2-|\xi'|^2}-\tau\lambda}.
\]

Let's assume that $h$ has a wavefront set contained in a neighborhood of some $(t_m,x_m,\tau_m,\xi_m')\in T^*(0,T)\times\Gamma$, with the subindex $m$ representing the fact that $(t_m,x_m)$ is the $m$-th time the broken bicharacteristic passing through $(0,x_0,\tau_0,\xi_0)$ reaches the boundary and is reflected back to the interior of $\Omega$.

We back-propagate the singularities near $(t_m,x_m,\tau_m,\xi_m')$ by applying $F^{-1}$ if $m=1$, otherwise, the
bicharacteristic passing through $(t_m,x_m,\tau_m,\xi_m)$ is propagated back to smaller times until it intersect the boundary at $(t_{m-1},x_{m-1})$. 
The back projection of the Robin data at the first reflection point gives
\[
-\lambda\partial_th|_{\RR\times\Gamma \text{ near 1st reflection pt.}} \longmapsto F^{-1}Qh.
\]
If instead $m>1$, we back-propagate this singularity using $G^{-1}$ until it reaches the boundary. Since we are assuming the wavefront set of $h$ is contained in a conic neighborhood of $(t_m,x_m,\tau_m,\xi'_m)$, then $h$ is smooth near $(t_{m-1},x_{m-1})$, hence the normal derivative of the back-projection must satisfies $\partial_\nu v - \lambda\partial_t v\cong 0$ there.
The wave that is reflected (when going backward in time) has a leading amplitude that is proportional to the incoming wave field (the sign will depend on the magnitude of $\lambda$ on that point). This follows by noticing that by splitting $v$ into an incoming and reflected wave, this is $v=v_{in}+v_{ref}$, they must satisfy at the boundary $(N_{in}-\lambda \partial_t)v_{in}+(N_{out}-\lambda \partial_t)v_{ref} \cong0$, which after applying $(N_{in}-\lambda \partial_t)^{-1}$ to both sides it implies $v_{in}\cong - (N_{in}-\lambda \partial_t)^{-1}(N_{out}-\lambda \partial_t)v_{ref}$. The principal symbol of $- (N_{out}-\lambda \partial_t)^{-1}(N_{in}-\lambda \partial_t)$ is precisely the one of the operator $R$ defined above which means $R\cong - (N_{in}-\lambda \partial_t)^{-1}(N_{out}-\lambda \partial_t)$.

If $m=1$, the backward Robin trace at the second and first reflection points, and the back-projection of this data to $t=0$, are given by
\[
-\lambda \partial_th|_{\RR\times\Gamma \text{ near 2nd reflection pt.}}\longmapsto 0|_{\RR\times\partial\Omega \text{ near 1st reflection pt.}} \longmapsto -F^{-1}(G^{-1}R)Qh.
\]
On the other hand, 
if more reflection occur during the time interval $(0,T)$ we add more reflections points to the previous diagram all with null Robin data at the boundary. In general we obtain
\[
\begin{aligned}
-\lambda \partial_th|_{\RR\times\Gamma \text{ near $m$-th reflection pt.}}&\longmapsto 0|_{\RR\times\partial\Omega \text{ near $(m-1)$-th reflection pt.}}\longmapsto \cdots\\
&\hspace{-5em}\cdots \longmapsto 0|_{\RR\times\partial\Omega \text{ near 1st reflection pt.}}\longmapsto F^{-1}(-G^{-1}R)^mQh.
\end{aligned}
\]
The previous can be generalized to arbitrary distributions $h$ supported in $(0,T)\times\Gamma$ via a partition of unity. The previous defines an FIO of order $(0,1)$ in $(0,T)\times\Gamma$ with canonical relation of graph type that we denote by $\mathcal{A}^{mic}_a$, and which satisfies that for any $s\in \RR$ it is a continuous map
\begin{equation}\label{cont_Aa}
\mathcal{A}^{mic}_a:H^s_{comp}((0,T)\times\Gamma)\to H^s_{loc}(\Omega)\times H^{s-1}_{loc}(\Omega).
\end{equation}

Let's consider $h=u^+|_{\RR\times\Gamma}\cong \Lambda_a^+\ff$ with $\ff$ microlocalized, thus its wave-front set is contained in neighborhoods of the multiple reflection points $\{(t_{n_j},x_{n_j},\tau_{n_j},\xi_{n_j}')\}_{j=1}^m\in T^*(0,T)\times\Gamma$ of the broken geodesic $\gamma_{(x_0,\xi_0)}$. Notice there might be more reflection points associated to the same bicharacteristic (which we denote by $(t_{i},x_{i},\tau_{i},\xi_{i}')$), however, we only consider those that reach the observation set $\Gamma$. With the aid of a microlocal partition of the unity, we write $h = \sum_{j=1}^mh_j$ with each $h_j$ having wavefront set in a neighborhood of its respective $(t_{n_j},x_{n_j},\tau_{n_j},\xi_{n_j}')$. We then back-propagate each of the $h_j$ independently (as done above) and obtain
\begin{equation}\label{parametrix_Aa}
\begin{aligned}
\mathcal{A}^{mic}_a\chi\Lambda_a^+\ff \;&\cong\; \sum^{m}_{j=1}F^{-1}(G^{-1}R)^{{n_j}-1}Q\chi h_j\;\\
&\cong\; \sum^{m}_{j=1}F^{-1}(G^{-1}R)^{{n_j-1}}Q\chi P(GR)^{{n_j-1}}F\ff.
\end{aligned}
\end{equation}

In order to analyze the symbol of the resulting operator let's consider the following notation: for a function $a(x,t,\tau,\xi)$ we write $a_j = a(t_{{n_j}},x_{{n_j}},\tau_{{n_j}},\xi'_{{n_j}})$; and we also notice that when the bicharacteristic hits the boundary outside of $\Gamma$ then $r=1$ there, therefore 
\[
\prod^{n_j-1}_{i=1} r(t_{{i}},x_{{i}},\tau_{i},\xi'_{i})= \prod^{j-1}_{i=1}r(t_{{n_i}},x_{{n_i}},\tau_{{n_i}},\xi'_{{n_i}}) =\prod^{j-1}_{i=1}r_i.
\]
The last expression is taken to be 1 for $j=1$.

A multiple application of Egorov's theorem \cite{HoIV} and the previous allow us to deduce that $\mathcal{A}^{mic}_a\chi\Lambda_a^+$ is a order zero $\Psi$DO with principal symbol
\[
\begin{aligned}
(x_0,\xi_0)\; \longmapsto\; & \sum^{m}_{j=1}\chi_{j}q_jp_{j}\prod^{j-1}_{i=1}r_i^2 
\end{aligned}
\]
which is non-null for visible singularities. We conclude that the operator $\mathcal{A}^{mic}_a\chi\Lambda_a^+$ is elliptic in $\Omega_0$ under the visibility condition \ref{def:vis_cond2}. An analogous argument implies the ellipticity of $\mathcal{A}^{mic}_a\chi\Lambda_a^-$ near visible singularities propagating according to the negative sound speed (i.e., those associated to null-bicharacteristic satisfying $\tau-c(x)|\xi|=0$).

\section{Bounded geometry with transparent-boundary}

\subsection{Uniqueness and stability (Proof of Theorem \ref{thm:uniq_stab}: part 1)}\label{subsec:uniq_bdd}
Let $\chi\in C^\infty_0(\RR\times\partial\Omega)$ be such that  $\supp(\chi)\subset [0,T)\times\Gamma$ and $\chi = 1$ in $[0,T_0]\times\Gamma_0$, for some $T_0<T$ so that the visibility condition (Definition \ref{def:vis_cond1}) still holds for $(\Gamma_0,T_0,\Omega_0)$. Let $\sfA_a$ be the back-projection operator defined as $\sfA_ah := (v,v_t)|_{t=0}$ with $v$ solution to
$$
\left\{
\begin{aligned}
v_{tt}-c^{2}(x)\Delta v + a(x)v_t&=0, &&\text{in }(0,T)\times\Omega,\\
v &= h,&&\text{on }(0,T)\times\partial\Omega,\\
(v,v_t)|_{t=T}&= (0,0),&&\text{in }\Omega.\\
\end{aligned}
\right.
$$
The boundary data is given by $h=\chi\Lambda_a\ff = \chi u|_{(0,T)\times\Gamma}$, where $u$ satisfies
\begin{equation}\label{PAT_sb}
\left\{
\begin{aligned}
u_{tt}-c^{2}(x)\Delta u + a(x)u_t&=0, &&\text{in }(0,T)\times\RR^n,\\
(u,u_t)|_{t=0}&= \ff,&&\text{in }\RR^n.\\
\end{aligned}
\right.
\end{equation}
The composition $\sfA_a\chi\Lambda_a$ is known to be a classical $\Psi$DO of order zero and elliptic under the visibility condition \ref{def:vis_cond1} (see \cite[Theorem 4.1]{H}), therefore, we can find a properly supported $\Psi$DO of order zero such that ${\bf Q} \sfA_a\chi\Lambda_a = \text{\bf Id}+{\bf K_0}$ in a neighborhood of the compact set $\overline{\Omega}_0$, and with ${\bf K_0}$ a smoothing operator. 
Applying ${\bf Q} \sfA_a\chi$ to $\Lambda_a\ff$ 
and rearranging terms we obtain
$$
\ff = {\bf Q} \sfA_a\chi \Lambda_a\ff -{\bf K_0}\ff,
$$
thus, taking the $H^{s}(\Omega_0)\times H^{s-1}(\Omega_0)$-norm with $s\in\RR$ leads to
$$
\|\ff\|_{H^{s}(\Omega_0)\times H^{s-1}(\Omega_0)}\leq \|{\bf Q}\sfA_a\chi \Lambda_a \ff\|_{H^{s}(\Omega_0)\times H^{s-1}(\Omega_0)} +\|{\bf K_0}\ff\|_{H^{s}(\Omega_0)\times H^{s-1}(\Omega_0)} .
$$
The map ${\bf K_0}:H^{s-1}(\Omega_0)\times H^{s-2}(\Omega_0)\to H^{s}(\Omega_0)\times H^{s-1}(\Omega_0)$ is continuous. On the other hand, 
${\bf Q}$ is a zero-th order elliptic $\Psi$DO and $\sfA_a$ is an FIO of order $(0,1)$ with canonical relation of graph type, this implies
\begin{equation}\label{microlocal_stability}
\begin{aligned}
\|\ff\|_{H^{s}(\Omega_0)\times H^{s-1}(\Omega_0)}&\leq C\|\chi\Lambda_a\ff\|_{H^s((0,T)\times\partial\Omega)} +C\|\ff\|_{H^{s-1}(\Omega_0)\times H^{s-2}(\Omega_0)}.
\end{aligned}
\end{equation}

We use the stable recovery of singularities depicted by the previous inequality to obtain uniqueness and we do so by following arguments from \cite{BLR}. Once uniqueness has been established the stability inequality is derived from a standard compactness-uniqueness argument.

Let $\mathcal{N}$ denote the subspace of $H^1((0,T)\times\Omega)$ consisting of all 
invisible solutions to \eqref{PAT_sb}, this is, those satisfying $u|_{(0,T)\times\Gamma} = 0$ and with initial state $(u,u_t)|_{t=0}\in \mathcal{H}(\Omega_0)$. 
From the previous inequality we have that for any $u\in\mathcal{N}$,
$$
\|(u,u_t)|_{t=0}\|_{\mathcal{H}(\Omega_0)} \leq C\|(u,u_t)|_{t=0}\|_{H^{0}(\Omega_0)\times H^{-1}(\Omega_0)},
$$
where the inclusion $\mathcal{H}(\Omega_0)\hookrightarrow H^{0}(\Omega_0)\times H^{-1}(\Omega_0)$ is compact, thus, Riesz's Lemma leads us to conclude that $\mathcal{N}$ is a finite-dimensional subspace. 

Given an arbitrary $u\in\mathcal{N}$, since $(u,u_t)|_{t=0}\in \mathcal{H}(\Omega_0)$, and using again \eqref{microlocal_stability} but this time for $s=2$, we get $(u,u_t)|_{t=0}\in H^2(\Omega_0)\times H^1(\Omega_0)$. Consequently, $(u(t),u_t(t))\in H^2(\Omega)\times H^1(\Omega)$ for all $t\in[0,T]$ and in particular $\partial_tu \in H^1((0,T)\times\Omega)$ (this follows, for instance, by the FIO properties of the map $\ff\mapsto u$). Due to the invariance of the damped wave equation in \eqref{PAT_sb} under time-differentiation, $u_t$ is also a solution vanishing on $[0,T]\times\Gamma$, therefore, $\partial_tu\in\mathcal{N}$. This means $\partial_t$ is a linear operator mapping the finite-dimensional space $\mathcal{N}$ onto itself.

Let's first consider the case of partial data and $\Omega_0\Subset\Omega$. Assuming there is an eigenvalue $\kappa\in\CC$ of $\partial_t$ with a non-trivial eigenfunction $u\in\mathcal{N}$, hence $\partial_tu = \kappa u$, the eigenfunction must take the form $u=e^{\kappa t}f$, with $f=u|_{t=0}$ the initial state, which in addition of being compactly supported (inside $\bar{\Omega} _0$), it has to solve the elliptic equation
$$
-c^{2}\Delta f + (\kappa^2 +\kappa a(x))f = 0\quad\text{in}\quad\Omega.
$$
It follows from the unique continuation property of elliptic operators that the only solution to this is $f\equiv 0$, and consequently, there is no (nontrivial) eigenfunction of $\mathcal{N}$ from where one concludes that $\mathcal{N} = \{0\}$.

On the other hand, for $\Omega_0=\Omega$ and $\Gamma=\partial\Omega$, we known that $f|_{\partial\Omega}=0$, but this is not enough to deduce that it vanishes everywhere. However, we can use $u|_{[0,T]\times\partial\Omega}$ to determine the Neumann data $\partial_\nu u|_{[0,T]\times\partial\Omega}$ by solving the exterior initial boundary value problem (see e.g. \cite[\S 6.1]{SUsurvey}) and conclude that $\partial_\nu u|_{[0,T]\times\partial\Omega} = 0$. This subsequently implies $\partial_\nu f|_{\partial\Omega}=0$ and consequently that $f=0$. The last step is a result of uniqueness for the elliptic Cauchy data problem.

Now that we know the injectivity of the observation map holds, we go back to inequality \eqref{microlocal_stability} where we can apply a well-known compactness-uniqueness argument (see, for instance, \cite{CO} for more details) to deduce the stability inequality
$$
\|\ff\|_{\mathcal{H}(\Omega_0)}\leq C\|\Lambda_a\ff\|_{H^1((0,T)\times\Gamma)},
$$
for some other constant $C>0$.

\subsection{Reconstruction}\label{subsec:rec_bdd}
\subsubsection{Complete data: revisiting \cite{P}}
Let's assume that observations take place in the whole boundary of the region $\Omega$, this is, we set $\Lambda_a\ff = u|_{(0,T)\times\partial\Omega}$. In \cite{P}, a Neumann series reconstruction formula was obtained by virtue of the time-reversal operator $\Pi_1\AA_a$ (recall $\Pi_1(f_1,f_2)=f_1$), for $\AA_ah = (v,v_t)|_{t=0}$ with $v$ solution to the dissipative back-projection system
\begin{equation}\label{Diss_TR}
\left\{\begin{aligned}
(\partial^2_t - a\partial_t- c^2\Delta)v &= 0,&&\text{in }(0,T)\times\Omega,\\
v &=  h ,&&\text{on }(0,T)\times\partial\Omega,\\
(v,v_t)|_{t=T}&=(\phi,0),&&\text{in }\Omega,\\
\end{aligned}\right.
\end{equation}
and $\phi$ the harmonic extension of $h(T)$.
The resulting error operator is given by $K_a := \Pi_1\KK_a$, which assigns $\Pi_1\KK_a\ff = w(0)$ for $w$ solution to 
\begin{equation}\label{error_system_sb}
\left\{\begin{aligned}
(\partial^2_t - c^2\Delta)w &= -a(u_t+v_t),&&\text{in }(0,T)\times\Omega,\\
w &= 0 ,&&\text{on }(0,T)\times\partial\Omega,\\
(v,v_t)|_{t=T}&=(u-\phi,u_t)|_{t=T},&&\text{in }\Omega.\\
\end{aligned}\right.
\end{equation}
Introducing the functional space $H_{D,a}(\Omega)$ as the completion of $C^\infty_0(\Omega)$ under the norm
$$
\|f\|_{H_{D,a}} :=\int_\Omega |\nabla f|^2 + c^{-2}|af|^2dx,
$$
it was stated that for a non-trapping manifold $(\Omega,c^{-2}dx^2)$ ---this is, such that the visibility conditions holds throughout the whole domain and boundary--- the operator $K_a$ is a contraction in $H_{D,a}(\Omega)$. 
In the derivation of such result the next inequality was used,
$$\|K_a\ff\|^2_{H_{D,a}} = \|w(0)\|^2_{H_{D,a}}\leq E_\Omega(\ww(0)),$$ 
however, it is not clear that this inequality holds in general since it imposes a precise relation between the norms of $w_t(0)$ and $aw(0)$. The comparison of these two functions is not evident. 

The right way to proceed is by generalizing the analysis to both components of $\ww$ and consider then the full back-projection operator $\AA_a$, and of course the error operator $\KK_a = \text{\bf Id} - \AA_a\Lambda_a$. The previous inequality is superseded by the trivial equality $\|\KK_a\ff\|^2_{\mathcal{H}(\Omega)} = E_\Omega(\ww(0))$. 
Here we assume $\Omega_0 = \Omega$ satisfies the visibility condition, or equivalently that $(\Omega,c^{-2}dx^2)$ is non-trapping (i.e. $T_1(\Omega,\partial\Omega)<\infty$). 

Following the computations carried out in \cite{P}, one easily verifies that $\KK_a$ is a contraction over $\mathcal{H}(\Omega)$, 
provided $\partial\Omega$ is strictly convex and measurements are taken all over the boundary in such a way that every singularity issued from $\Omega$ is visible in finite time from $\partial\Omega$. We then obtain the reconstruction result below. Compared to \cite{P}, there is an improvement in the lower bound for the observation time needed to achieve reconstruction, which now matches the one for the unattenuated case (see \cite{SU1} or \cite{QSUZ}). The proof is essentially the same and the improvement follows after noticing that the microlocal analysis needed to prove Proposition \ref{prop:energy_ineq_sf} simplifies in the case of geodesics associated to singularities of the initial condition with only one branch of it reaching $\Gamma$ (thus, the other one still trapped inside $\Omega$ at time $T$). For more details we refer to  \cite{P} and the proof of the analogous Proposition \ref{prop:energy_ineq} for the partially-reflecting boundary case.

\begin{theorem}\label{thm:reconstruction_freespace}
Let $(\Omega,c^{-2}dx^2)$ be a non-trapping manifold with $\partial\Omega$ smooth and strictly convex, and let $T>\frac{1}{2}T_1(\Omega,\partial\Omega)$. 
The operator ${\bf K}_a$ is a contraction in $\mathcal{H}(\Omega)$ and we get the following reconstruction formula for the photoacoustic problem \eqref{PAT}:
$$\ff = \sum^\infty_{m=0}{\bf K}_a^m\AA_ah,\quad\quad h:=\Lambda_a\ff.$$
\end{theorem}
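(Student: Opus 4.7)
The plan is to follow the strategy of \cite{P} but corrected by working with the full error operator $\KK_a$ on $\cH(\Omega)$ rather than only its first component on a Dirichlet-attenuated space. Since $\Omega_0 = \Omega$, one has ${\bf \Pi}_\Omega = {\bf Id}$ and $\KK_a \ff = (w(0), w_t(0))$ with $w$ solving \eqref{error_system_sb}, so the contraction claim reduces to
$$E_\Omega(\ww(0)) \leq \rho^2 \|\ff\|^2_{\cH(\Omega)} \quad\text{with } \rho < 1 \text{ independent of } \ff.$$
Existence of a Neumann series inverse for $\AA_a\Lambda_a = {\bf Id} - \KK_a$ then follows immediately, yielding the stated reconstruction formula.

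The first ingredient is the standard energy identity for the \emph{un-damped} wave equation $(\partial_t^2 - c^2\Delta) w = -a(u_t + v_t)$ with homogeneous Dirichlet data. Propagating from $t = T$ to $t = 0$ and using $w|_{\partial\Omega} = 0$ to kill the boundary integral,
$$E_\Omega(\ww(0)) = E_\Omega(\ww(T)) + 2\int_0^T\!\!\int_\Omega c^{-2}\, a(u_t + v_t)\, w_t\, dx\, dt.$$
The cross term is the reason \cite{P} introduced the sign flip $-a\partial_t$ in \eqref{Palacios_TR}: with that choice, the sum $z := u + v$ satisfies $\square z = a(v_t - u_t)$ with $v - u$ and $v + u$ controlled by compatible energy identities, so that $u_t + v_t$ can be bounded in $L^2([0,T]\times\Omega)$ by $\|\ff\|_{\cH(\Omega)}$ times a harmless constant depending only on $\|a\|_\infty$ and $T$. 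Cauchy--Schwarz followed by a Grönwall-type absorption reduces the bulk contribution to a small multiple of $E_\Omega(\ww(0))$ plus a controlled multiple of $E_\Omega(\ww(T))$, and thus reduces the contraction to a strict estimate on the final-time energy.

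The second ingredient --- and the main obstacle --- is the microlocal energy inequality
$$E_\Omega(\ww(T)) = \|(u-\phi)(T)\|_{H_D(\Omega)}^2 + \|u_t(T)\|_{L^2(\Omega)}^2 \leq (1 - 2\delta)\|\ff\|^2_{\cH(\Omega)}$$
for some $\delta > 0$, valid whenever $T > \tfrac{1}{2}T_1(\Omega,\partial\Omega)$. This is the analogue of the announced Proposition~\ref{prop:energy_ineq_sf}. I would prove it using the forward parametrix of Section~\ref{subsec:GO}: decompose $u$ into its two characteristic branches $u^\pm$ propagating along $\gamma_{x_0,\pm\xi_0}$. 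The definition of $T_1$ ensures that for every $(x_0,\xi_0) \in S^*\overline\Omega$, at least one of the two branches has exited $\overline\Omega$ by time $T_1$; the factor $1/2$ arises because complete data gives access to both signs of $\tau$ in the parametrix, effectively doubling the observation window, as in the unattenuated analysis of \cite{SU1,QSUZ}. A non-negligible portion of each singularity is therefore absent from $u(T)|_\Omega$. Subtracting the harmonic extension $\phi$ of $h(T)$ then removes the Dirichlet trace at the cost of only a \emph{decrease} in the $H_D$-norm (by the minimality of harmonic functions in Dirichlet energy among extensions of the boundary data), securing the strict inequality. The simplification over \cite{P} is precisely that one only needs \emph{one} branch to have escaped, not both --- this is what halves the threshold.

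Combining the two steps yields $\|\KK_a \ff\|_{\cH(\Omega)} \leq \rho \|\ff\|_{\cH(\Omega)}$ with $\rho \in (0,1)$, and the Neumann series for $({\bf Id} - \KK_a)^{-1}$ converges in the operator norm on $\cH(\Omega)$, giving
$$\ff = \sum_{m=0}^{\infty} \KK_a^m \AA_a h.$$
Strict convexity of $\partial\Omega$ enters only to ensure that reflected and grazing singularities do not complicate the parametrix, and smoothness of $c, a$ ensures the transport equations of Section~\ref{subsec:GO} are globally solvable up to the exit time. The genuinely delicate step is the microlocal inequality; the rest is energy bookkeeping.
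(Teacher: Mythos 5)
Your overall architecture (energy identity for $w$, a microlocal escape estimate, Neumann series) matches the paper's, and you correctly identify both the role of the harmonic extension and the reason the threshold drops to $\tfrac{1}{2}T_1$. But there is a genuine gap in your handling of the cross term, and it is precisely the step that makes the whole argument work. You propose to control $2\int_0^T\!\int_\Omega c^{-2}a(u_t+v_t)w_t\,dx\,dt$ by bounding $u_t+v_t$ in $L^2$ by $C\|\ff\|_{\cH(\Omega)}$ and then using Cauchy--Schwarz with a Gr\"onwall-type absorption. This cannot produce a contraction: splitting $|2\int ac^{-2}(u_t+v_t)w_t|\leq \epsilon\int|w_t|^2+\epsilon^{-1}\int a^2c^{-4}|u_t+v_t|^2$ leaves a term $C\epsilon^{-1}\|\ff\|^2_{\cH(\Omega)}$ whose constant is not small (and grows as $\epsilon\to 0$), so the best you can conclude is $E_\Omega(\ww(0))\leq C'E_\Omega(\ww(T))+C''\|\ff\|^2_{\cH(\Omega)}$ with $C''$ of order one --- useless for showing $\|\KK_a\ff\|\leq\rho\|\ff\|$ with $\rho<1$. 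The point you are missing is that $w=u-v$ exactly, hence $w_t=u_t-v_t$ and the cross term is the \emph{algebraic identity}
\begin{equation*}
2\int_{(0,T)\times\Omega}ac^{-2}(u_t+v_t)(u_t-v_t)\,dt\,dx=2\int ac^{-2}|u_t|^2-2\int ac^{-2}|v_t|^2,
\end{equation*}
in which the $v$-term has a favorable sign (this is the entire purpose of the sign flip $-a\partial_t$ in \eqref{Palacios_TR}) and can simply be dropped. No Gr\"onwall argument and no $L^2$ bound on $u_t+v_t$ is needed.

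This miscalculation also distorts your second step. After dropping the dissipative $v$-term and using the Dirichlet orthogonality $E_\Omega(\ww(T))=E_\Omega(\uu(T))-\|\phi\|^2_{H_D(\Omega)}$, the quantity you must bound strictly below $\|\ff\|^2_{\cH(\Omega)}$ is not $E_\Omega(\ww(T))$ but the \emph{extended} energy $\mathcal{E}_{\Omega,T}(\uu)=E_\Omega(\uu(T))+2\int_0^T\!\int_\Omega ac^{-2}|u_t|^2$, which includes the energy already dissipated by the attenuation. The paper gets this from Proposition \ref{prop:energy_ineq_sf}, which is a lower bound on the \emph{exterior} energy $E_{\RR^n\backslash\Omega}(\uu(T))$, combined with exact conservation $\|\ff\|^2_{\cH(\Omega)}=\mathcal{E}_{\Omega,T}(\uu)+E_{\RR^n\backslash\Omega}(\uu(T))$; your proposed inequality $E_\Omega(\ww(T))\leq(1-2\delta)\|\ff\|^2$ is neither the statement you would need nor obviously sufficient, since it ignores the accumulated dissipation term. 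Your microlocal intuition for why a fixed fraction of the energy escapes by time $T>\tfrac12 T_1$ is sound, but it must be packaged as the exterior-energy estimate of Proposition \ref{prop:energy_ineq_sf} and fed through the conservation law, not applied to $\ww(T)$ directly.
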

\begin{proof}
Let's analyze the energy inequality associated to \eqref{error_system_sb}, where we recall our notation $\ww = (w,w_t)$. We multiply by $c^{-2}w_t$ and integrate over $(0,T)\times\Omega$. Then, integration by parts yields
\begin{equation}\label{ineq_w_sb}
\begin{aligned}&E_\Omega(\ww(0)) \\
&= E_\Omega(\ww(T)) + 2\int_{(0,T)\times\Omega} ac^{-2}(u_t+v_t)(u_t - v_t)dtdx\\
&= E_\Omega(\ww(T)) + 2\int_{(0,T)\times\Omega}  ac^{-2}|u_t|^2dtdx - 2\int_{(0,T)\times\Omega} ac^{-2}|v_t|^2dtdx\\
&\leq E_\Omega(\uu(T)) + 2\int_0^T\int_\Omega ac^{-2}|u_t|^2dtdx - \|\phi\|^2_{H_D(\Omega)}\\
&\leq \mathcal{E}_{\Omega,T}(\uu).
\end{aligned}
\end{equation}
The conclusion of the theorem follows directly from the next estimate which was proven in \cite{P}. We also refer the reader to Proposition \ref{prop:energy_ineq} and its proof below, which states the analogous inequality in the case of a partially reflecting boundary. 

\begin{proposition}\label{prop:energy_ineq_sf}
 Let $\uu$ be a solution of \eqref{PAT_sb} with initial condition $\ff\in\mathcal{H}(\Omega)$. There exists $C(T)>1$ so that
$$\|\ff\|^2_{\mathcal{H}(\Omega)} \leq CE_{\RR^n\backslash\Omega}(\uu(T)).$$
\end{proposition}
Recalling that the damping coefficient is supported inside $\Omega$ we see that the energy estimate associated to $\uu$ gives
$$
\|\ff\|^2_{\mathcal{H}(\Omega)}= E_\Omega(\uu(0)) = \mathcal{E}_{\Omega,T}(\uu) + E_{\RR^n\backslash\Omega}(\uu(T)).$$
We then use Proposition \ref{prop:energy_ineq_sf} to estimate $\mathcal{E}_{\Omega,T}(\uu)$ from above, leading to
$$
\mathcal{E}_{\Omega,T}(\uu) = \|\ff\|^2_{\mathcal{H}(\Omega)} -E_{\RR^n\backslash\Omega,T}(\uu)\leq (1-C^{-1})\|\ff\|^2_{\mathcal{H}(\Omega)}. 
$$
Bringing this together with \eqref{ineq_w_sb}, and noticing that $\|\KK_a\ff\|^2_{\mathcal{H}(\Omega)}= E_{\Omega}(\ww(0))$, the previous implies
$$
\|\KK_a\ff\|^2_{\mathcal{H}(\Omega)}\leq (1-C^{-1})\|\ff\|^2_{\mathcal{H}(\Omega)}.
$$
This means $\KK_a$ is a contraction in $\mathcal{H}(\Omega)$ and consequently $\AA_a\Lambda_a = \text{\bf Id} - \KK_a$ is invertible via a Neumann series.
\end{proof}

\subsubsection{Partial data}

It is not hard to see that the previous reconstruction procedure ---based on time-reversal--- doesn't naturally extend to the partial data case (at least in the transparent-boundary geometry). The fact we are not able to observe on the whole boundary may lead us to lose important low frequency information, even when the high frequency component is well captured under the (microlocal) visibility assumption. This portion of the energy that escapes the boundary detection region, and thus not observed, creates  difficulties when deciding what condition to impose on the rest of the boundary (outside the observation part) during the time-reversal process. Indeed, for the time-reversal step, there is no boundary condition that can guarantee that the error system (the one satisfied by $w$) is energy-dissipative or at least energy-preserving.

The reconstruction in this setting was addressed recently in \cite{HaN}. Regardless the fact that at the time the injectivity of the observation map was still an open question, the authors stablished the convergence of an iterative method under the hypothesis of the visibility condition and  injectivity. In consequence, our result in Theorem \ref{thm:uniq_stab} guarantees the validity of the reconstruction algorithm proposed in \cite{HaN} for large enough observation times. Their reconstruction scheme is based on studying the adjoint operator associated to the forward problem \eqref{PAT} (for $U=\RR^n$), which consists in solving a dissipative system (similar to \eqref{Diss_TR}) in the whole space and with a source term supported on $\partial\Omega$, which of course depends on the boundary observations. 

No Neumann series formula has been proven to converge for the partial data case even in the context of unattenuated media. A discussion about the difficulties encountered on this matter can be found in \cite{QSUZ}. Nevertheless, by removing the boundedness condition over $\Omega$ and assuming there is dissipation of energy across the observation set $\Gamma\subset\partial\Omega$ one can indeed prove the existence of a Neumann series formula as we will see next. This is the content of Theorem \ref{thm:reconstruction} in the next section.

\section{Unbounded geometry with partially--reflecting boundary}
Most of the computations in this section resemble the ones presented in the previous case. Uniqueness and stability are obtained by following what we did previously almost step-by-step. The differences rely in the proof of reconstruction, where the presence of the reflecting boundary require to analyze the behavior of  the propagating wave field near the boundary. 
\subsection{Uniqueness and stability (Proof of Theorem \ref{thm:uniq_stab}: part 2)}\label{subsec:uniq_unbdd}
Let $\chi\in C^\infty_0(\RR\times\partial\Omega)$ be such that  $\supp(\chi)\subset[0,T)\times\Gamma$, and $\chi=1$ in $[0,T_0]\times\Gamma_0$ for some $T_0<T$ for which the visibility condition \ref{def:vis_cond2} still holds for $(\Gamma_0,T_0,\Omega_0)$. Let $\mathcal{A}^{mic}_a$ be the microlocal back-projection operator constructed in section \ref{subsec:GO} which back-propagates the boundary data $h=\chi\Lambda_a\ff$ by imposing Robin boundary conditions.

The analysis carried out in \ref{subsec:GO} ---in particular, the ellipticity of $\mathcal{A}^{mic}_a\chi\Lambda_a$--- allows us to deduce the existence of a properly supported $\Psi$DO of order zero, ${\bf Q}$, such that ${\bf Q}\mathcal{A}^{mic}_a\chi\Lambda_a = \text{\bf Id}_{\Omega_0}+\KK_0$ in a neighborhood of the compact set $\overline{\Omega}_0$ and with $\KK_0$ a smoothing operator. Then
$$
\ff = {\bf Q}\mathcal{A}^{mic}_a\chi \Lambda_a\ff -\KK_0\ff,
$$
and therefore,
$$
\|\ff\|_{H^s(\Omega_0)\times H^{s-1}(\Omega_0)}\leq \|{\bf Q}\mathcal{A}^{mic}_a\chi \Lambda_a \ff\|_{H^s(\Omega_0)\times H^{s-1}(\Omega_0)} +\|{\bf K_0}\ff\|_{H^s(\Omega_0)\times H^{s-1}(\Omega_0)} .
$$
We are in a similar situation as in the transparent-boundary case since $\KK_0:H^{s-1}(\Omega_0)\times H^{s-2}(\Omega_0)\to H^s(\Omega_0)\times H^{s-1}(\Omega_0)$ is continuous, ${\bf Q}$ is a zero-th order elliptic $\Psi$DO, and $\mathcal{A}^{mic}_a$ is an FIO of order $(0,1)$ with canonical relation of graph type, thus, satisfying \eqref{cont_Aa}. 
We then obtain
\begin{equation}\label{microlocal_stability2}
\begin{aligned}
\|\ff\|_{H^s(\Omega_0)\times H^{s-1}(\Omega_0)}&\leq C\|\chi\Lambda_a\ff\|_{H^s((0,T)\times\Gamma)} +C\|\ff\|_{H^{s-1}(\Omega_0)\times H^{s-2}(\Omega_0)}.
\end{aligned}
\end{equation}
The same argument used previously ---Riesz's Lemma and unique continuation for elliptic operators--- can be applied here to deduce the injectivity and stability of the inverse problem under the visibility condition.

\subsection{Reconstruction}\label{subsec:rec_unbdd}
Let $u$ be solution to the attenuating system \eqref{PAT}-\eqref{PAT_bc} (for $U=\Omega$, unbounded) and consider $\AA_a$ and $\KK_a$, the time-reversal and error operators defined in Section \ref{sec:TR}. The main result of this section is the next.
\begin{theorem}\label{thm:reconstruction}
Assume $\Gamma$ is a strictly convex surface for the metric $c^{-2}dx^2$ and the visibility condition hold for $T>0$, $\Omega_0\Subset\Omega$, and $\Gamma$. 
Then, ${\bf K}_a$ is a contraction in $\mathcal{H}(\Omega_0)$ and we get the following reconstruction formula for the photoacoustic problem \eqref{PAT}:
$$\ff = \sum^\infty_{m=0}{\bf K}_a^m{\bf\Pi}_{\Omega_0}\AA_ah\quad h:=\Lambda_a\ff.$$
\end{theorem}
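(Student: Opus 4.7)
The plan is to mirror the proof strategy of Theorem \ref{thm:reconstruction_freespace}, adapted to the Robin boundary setting where energy dissipation happens at the observation boundary $\Gamma$ rather than by escape into $\RR^n \setminus \Omega$. The reconstruction formula will follow at once from showing that $\KK_a$ is a strict contraction on $\mathcal{H}(\Omega_0)$: then $\mathbf{\Pi}_{\Omega_0}\AA_a\Lambda_a = \mathbf{Id}_{\Omega_0} - \KK_a$ inverts via Neumann series applied to $\ff$, yielding the announced expansion.

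The first step is the energy inequality for the error system \eqref{Palacios_error}. Writing $w = u - v$ (which the equation, the homogeneous Neumann-type reduction $\partial_\nu w=0$ on $\partial\Omega$, and the final conditions all confirm), I would multiply $(\partial_t^2 - c^2\Delta)w = -a(u_t+v_t)$ by $c^{-2}w_t$, integrate over $(0,T)\times\Omega$, and use $\partial_\nu w|_{\partial\Omega}=0$ to kill the boundary term. Using $w_t = u_t - v_t$, the right-hand side collapses to $-\int ac^{-2}(u_t^2-v_t^2)$. Since $\ww(T) = \uu(T)$ (because $\mathcal{P}(h(T))=0$) and $\int ac^{-2}v_t^2\geq 0$, this gives
\[
\|\KK_a\ff\|^2_{\mathcal{H}(\Omega_0)} \leq \|\ww(0)\|^2_{\mathcal{H}(\Omega)} \leq E_\Omega(\uu(T))+2\int_0^T\!\!\int_\Omega ac^{-2}|u_t|^2\,dx\,dt = \mathcal{E}_{\Omega,T}(\uu).
\]
The inequality uses continuity of the projection $\mathbf{\Pi}_{\Omega_0}$ between the relevant energy spaces.

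Next, running the standard energy identity for the forward system \eqref{PAT}--\eqref{PAT_bc} and using the Robin condition $\partial_\nu u = -\lambda u_t$ on $\partial\Omega$ produces
\[
\|\ff\|^2_{\mathcal{H}(\Omega_0)} = \mathcal{E}_{\Omega,T}(\uu) + 2\int_0^T\!\!\int_{\partial\Omega}\lambda |u_t|^2\,dS\,dt.
\]
Combining this with the previous bound yields $\|\KK_a\ff\|^2 \leq \|\ff\|^2 - 2\int_0^T\!\int_{\partial\Omega}\lambda|u_t|^2$. Strict contraction is therefore equivalent to an observability estimate of the form $\|\ff\|^2_{\mathcal{H}(\Omega_0)} \leq C\int_0^T\!\int_{\Gamma}\lambda|u_t|^2 dS\,dt$, which I expect to be the content of the forthcoming Proposition \ref{prop:energy_ineq}.

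The main obstacle is precisely this boundary observability estimate. At the principal symbol level, the microlocal machinery of section \ref{subsec:GO} already shows that singularities emanating from $\bar\Omega_0$ reach $\Gamma$ as broken bicharacteristics under the visibility assumption, and the elliptic back-projection $\mathcal{A}^{mic}_a\chi\Lambda_a^\pm$ pins down the high-frequency part; this handles the wavefront-set content of $\ff$ via a positive commutator / propagation-of-singularities argument against $\lambda|u_t|^2$ at $\Gamma$. The remaining low-frequency (compact) contribution is removed by a compactness-uniqueness argument that leverages the injectivity already established in Theorem \ref{thm:uniq_stab}, exactly as in the passage from \eqref{microlocal_stability2} to the final stability bound. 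The only genuinely new point compared with the transparent-boundary case of Theorem \ref{thm:reconstruction_freespace} is that one must track reflections at $\partial\Omega\setminus\Gamma$ (where $\lambda=0$ and $r=1$, so the broken geodesic is reflected without loss) and argue that, by the visibility condition for broken geodesics (Definition \ref{def:vis_cond2}), every such trajectory eventually hits $\Gamma$ within time $T$ where $\lambda>0$ captures a strictly positive fraction of the microlocal energy. Once Proposition \ref{prop:energy_ineq} is in hand, the contraction estimate $\|\KK_a\ff\|^2 \leq (1-C^{-1})\|\ff\|^2$ is immediate and the Neumann series converges.
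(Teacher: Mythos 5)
Your proposal follows essentially the same route as the paper: the energy identity for the error system \eqref{Palacios_error} giving $E_\Omega(\ww(0))\leq\mathcal{E}_{\Omega,T}(\uu)$, the forward energy balance transferring dissipation to the Robin boundary term, and the reduction of the contraction property to the boundary observability estimate of Proposition \ref{prop:energy_ineq}, which the paper likewise proves by combining the visibility-driven ellipticity of the boundary parametrix with a compactness--uniqueness argument resting on the injectivity of Theorem \ref{thm:uniq_stab}. The only caveat is that your sketch of the observability estimate stays at the level of strategy --- the paper's actual argument expresses $\lambda^{1/2}\partial_t h^{\pm}$ through the elliptic operators $\mathcal{N}^{\pm}=N_{in}(I-R^{\pm})(I+R^{\pm})^{-1}$ and invokes Garding's inequality together with a microlocal partition of unity and Egorov's theorem, rather than a positive-commutator argument --- but the ingredients you identify are the correct ones.
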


\begin{proof}
The error function $\ww$ solves \eqref{Palacios_error} with null Neumann conditions. The same energy computations as in the transparent boundary case lead us to the inequality 
\begin{equation}\label{ineq_w}
E_\Omega(\ww(0)) \leq
\mathcal{E}_{\Omega,T}(\uu).
\end{equation}
The conclusion of the theorem follows directly from the next estimate whose proof we skip for a moment.
\begin{proposition}\label{prop:energy_ineq}
 Let $\uu$ be a solution of \eqref{PAT}-\eqref{PAT_bc} in $\Omega$ with initial condition $\ff\in\mathcal{H}(\Omega_0)$. Assuming the hypothesis of the Theorem \ref{thm:reconstruction}, there exists $C(\Omega_0,T)>1$ so that
$$\|\ff\|^2_{\mathcal{H}(\Omega_0)} \leq C\int_{(0,T)\times\partial\Omega}\lambda|u_t|^2dtdS,$$
with $dS$ the surface measure on $\partial\Omega$.
\end{proposition}
The energy estimate for $\uu$ then gives
$$
\|\ff\|^2_{\mathcal{H}(\Omega_0)}= E_\Omega(\uu(0)) = \mathcal{E}_{\Omega,T}(\uu) + \int_{(0,T)\times\partial\Omega}\lambda|u_t|^2dtdS,$$
and we use Proposition \ref{prop:energy_ineq} to estimate from above $\mathcal{E}_{\Omega,T}(\uu)$. We get
$$
\mathcal{E}_{\Omega,T}(\uu) = \|\ff\|^2_{\mathcal{H}(\Omega_0)} - \int_{(0,T)\times\partial\Omega}\lambda|u_t|^2dtdS\leq (1-C^{-1})\|\ff\|^2_{\mathcal{H}(\Omega_0)}. 
$$
Plugging this into \eqref{ineq_w} and noticing that $\KK_a\ff = \Pi_{\Omega_0}\ww(0)$ thus $\|\KK_a\ff\|^2_{\mathcal{H}(\Omega_0)}\leq E_{\Omega}(\ww(0))$, then
$$
\|\KK_a\ff\|^2_{\mathcal{H}(\Omega_0)}\leq (1-C^{-1})\|\ff\|^2_{\mathcal{H}(\Omega_0)},
$$
and we conclude $\KK_a$ is a contraction in $\mathcal{H}(\Omega_0)$. The inversion of ${\bf\Pi}_{\Omega_0}\AA_a\Lambda_a = \text{\bf Id}_{\Omega_0} - \KK_a$ via a Neumann series follows directly.
\end{proof}

\begin{proof}[Proof of Proposition \ref{prop:energy_ineq}]
Let $\Gamma_0=\{\lambda\geq \lambda_0>0\}\subset\Gamma$ such that the visibility condition still holds for $(\Gamma_0,T,\Omega_0)$. 
We suppose for a moment that $WF(\ff)$ lies on a small conic neighborhood of some $(x_0,\xi_0)\in T^*\overline{\Omega}_0$. Assuming both branches of the geodesic issued from $(x_0,\xi_0)$ reach the observation region $\Gamma_0$ in time less than $T$, we denote by $(t_{n_j}^\pm,x_{n_j}^\pm)$ the times and points where those broken-geodesic segments make contact with $\Gamma_0$. The case of only one part of the geodesic reaching $\Gamma_0$ is simpler and follows from similar computations as the ones we present below. This is because in this case the observation map has only one component, namely, $\Lambda_a^+\ff$ or $\Lambda_a^-\ff$. 

We desire to estimate the energy deposited on $\Gamma$ due to the dissipative (Robin) conditions imposed there for which we use the boundary parametrix construction of Section \ref{subsec:GO}. Up to a compact operator acting on $\ff$ we have
$\Lambda_a\ff
\cong u^+|_{\RR\times\partial\Omega}+ u^-|_{\RR\times\partial\Omega}$, thus, denoting $h^\pm =u^\pm|_{\RR\times\partial\Omega}$ we see that
$$
\begin{aligned}
-\mathfrak{Re}\int_{(0,T)\times\partial \Omega}u_t\overline{\partial_\nu u}dtdS &=\mathfrak{Re}\int_{(0,T)\times\partial \Omega}\lambda |\partial_tu|^2dtdS \\
&\cong \mathfrak{Re}\sum_{\sigma=\pm}\int_{(0,T)\times\partial \Omega}\lambda|\partial_th^\sigma|^2dtdS, \\
\end{aligned}
$$
where $\langle\cdot,\cdot\rangle$ stands for the inner product in $L^2(\RR\times\partial\Omega)$ while the symbol $\cong$ stands for equality up to an error bounded by lower order norms of $\ff$, thus compact. 
Notice that, since
$$WF(h^+)\cap WF(h^-)=\emptyset,$$
any cross terms between $h^+$ and $h^-$ is estimated by lower order norms of $\ff$. Indeed, considering a microlocal cut-off $\tilde{\chi}$ supported around $WF(h^-)$ and with disjoint intersection with $WF(h^+)$ we have that $\tilde{\chi}(D)\partial_th^+, (\text{Id}-\tilde{\chi}(D))\partial_th^-\in C^\infty$, in other words, both can be regarded as smoothing operators applied to the initial condition $\ff$. Then, and recalling also the mapping properties of the FIO's $\Lambda_a^\pm$, for any $s<0$,
\begin{equation}\label{error_compact}
\begin{aligned}
|\langle \lambda\partial_t h^+, \partial_th^-\rangle| &\leq |\langle \lambda(\text{Id}-\tilde{\chi}(D))\partial_th^+, \partial_th^-\rangle| +  |\langle \lambda\chi(D)\partial_th^+, \partial_th^-\rangle|  \\
&\leq C\| (\text{Id}-\tilde{\chi}(D))\partial_th^-\|_{H^{-s}}\|\partial_t h^+\|_{H^{s}} \\
&\hspace{3em}+ C\| \tilde{\chi}(D)\partial_th^+\|_{H^{-s}}\|\partial_th^-\|_{H^{s}}\\
&\leq C\|\ff\|^2_{H^{s}(\Omega_0)\times H^{s-1}(\Omega_0)},\\
\end{aligned}
\end{equation}
with a final constant depending on $s$. 
In consequence, we deduce
\[
-\mathfrak{Re}\int_{(0,T)\times\partial \Omega}u_t\overline{\partial_\nu u}dtdS\geq \frac{1}{C}\big(\|\lambda^{1/2}\partial_th^+\|^2_{L^2} + \|\lambda^{1/2}\partial_th^-\|^2_{L^2}\big) - C\|\ff\|_{H^0(\Omega_0)\times H^{-1}(\Omega_0)}.
\]
To continue, we notice that on $\Gamma_0$, $\partial_th^+=\partial_tu^+|_{\RR\times\Gamma_0}\cong -\lambda^{-1}\partial_\nu u^+|_{\RR\times\Gamma_0}$, thus, on each neighborhood of the space-time point where the geodesic issued from $(x_0,\xi_0)$ hits the boundary and is reflected we have that
\[
\partial_th^+|_{\text{near the $m$-th reflection point on $(0,T)\times\Gamma_0$}}\cong -\lambda^{-1}N_{in}u^+_{in} - \lambda^{-1}N_{out}u^+_{ref},
\]
with $N_{in}$ and $N_{out}$ the respective incoming and outgoing microlocal Dirichlet-to-Neumann maps. Below we use that $N_{in}\cong -N_{out}$.

Recalling the reflection operators $R^\pm$ and the FIO's $G^\pm$ and $F^\pm$ we have that near a singularity
\[
N_{in}u^+_{in} + N_{out}u^+_{ref} \cong N_{in}(u^+_{in} - u^+_{ref})= N_{in}(I-R^+)g,
\]
for $g=\left(G^+R^+\right)^{m-1}F^+\ff$ and some $m\geq 1$, and noticing that $h^+\cong P^+g = (I+R^+)g$ ($P^+$ is the trace operator defined in section \ref{subsec:bdry_para}), then
\[
N_{in}u^+_{in} + N_{out}u^+_{ref} \cong N_{in}(u^+_{in} - u^+_{ref})= N_{in}(I-R^+)(I+R^+)^{-1}h^+.
\]
By denoting $\mathcal{N}^+=N_{in}(I-R^+)(I+R^+)^{-1}$ we have
\[
\|\lambda^{1/2}\partial_th^+\|^2_{L^2}\cong\langle \lambda^{-1}(\mathcal{N}^+)^*\mathcal{N}^+h^+,h^+\rangle,
\]
where $\mathcal{N}^+$ is a $\Psi$DO of order 1, and elliptic on $c^{-2}(x)\tau^2>|\xi'|^2$ since it is a composition of three elliptic operators (we use here that the principal symbol of $R^\pm$ take values in $(-1,1)$). We similarly define $\mathcal{N}^-$.
The previous implies
\[
\|\lambda^{1/2}\partial_th^+\|^2_{L^2} + \|\lambda^{1/2}\partial_th^-\|^2_{L^2} \cong \langle \lambda^{-1}(\mathcal{N}^+)^*\mathcal{N}^+h^+,h^+\rangle + \langle \lambda^{-1}(\mathcal{N}^-)^*\mathcal{N}^-h^-,h^-\rangle,
\]
with $\lambda^{-1}(\mathcal{N}^\pm)^*\mathcal{N}^\pm$ an elliptic $\Psi$DO's in the hyperbolic region $c^{2}(x)\tau^2>|\xi'|^2$. 
Their principal symbols are positive and bounded from below by $|(\tau,\xi)|^2$, 
thus, by Garding's inequality we obtain
\[
\begin{aligned}
\|\lambda^{1/2}\partial_th^+\|^2_{L^2} + \|\lambda^{1/2}\partial_th^-\|^2_{L^2}&\geq\frac{1}{C}\left(\|h^+\|^2_{H^1}+\|h^-\|^2_{H^1}\right) - C\left(\|h^+\|^2_{L^2}+ \|h^-\|^2_{L^2}\right)\\
&\geq \frac{1}{2C}\|h\|^2_{H^1} - C\left(\|h^+\|^2_{L^2}+ \|h^-\|^2_{L^2}\right)
\end{aligned}
\]
for some $C>0$ depending on $\Omega_0$. 

We then bring together all the previous, along with the continuity properties of $\Lambda_a^{\pm}$ and the stability inequality of Theorem \ref{thm:uniq_stab}, to deduce 
\begin{equation}\label{ineq_f1}
\|\ff\|^2_{H^1(\Omega_0)\times H^0(\Omega_0)}\leq C\left(-\mathfrak{Re}\int_{(0,T)\times\partial \Omega}u_t\overline{\partial_\nu u}dtdS\right) + C\|\ff\|^2_{H^0(\Omega_0)\times H^{-1}(\Omega_0)}.
\end{equation}
Notice this last inequality was obtained for $\ff$ with wavefront set in a conic neighborhood of a single covector. In order to generalize it to an arbitrary $\ff\in \mathcal{H}(\Omega_0)$ we use a microlocal partition of unity. 

Let's consider a finite pseudo-differential partition of unity $\{X_j\}_j$, whose symbols satisfy $1=\sum\chi_j$, thus, they localize in conic neighborhoods of a finite number of covectors $(x_j,\xi^j)\in WF(\ff)\cap T^*\overline{\Omega}_0$. Then, $\ff = (\text{Id}-\sum {\bf X}_j)\ff + \sum {\bf X}_j\ff$, where $WF(\ff)\cap WF(\text{Id}-\sum {\bf X}_j)=\emptyset$. We now set $\uu = e^{t{\bf P}_a}\ff$, the true solution to \eqref{PAT}, which from the previous satisfies $\uu\cong e^{t{\bf P}_a}\sum_j{\bf X}_j\ff = \sum_je^{t{\bf P}_a}{\bf X}_j\ff$. Let's denote $\uu^j = e^{t{\bf P}_a}{\bf X}_j\ff$. 
We have that inequality \eqref{ineq_f1} holds for each ${\bf X}_j\ff$, thus 
$$
\|{\bf X}_j\ff\|^2_{H^1(\Omega_0)\times H^0(\Omega_0)}\leq C\left(-\mathfrak{Re}\int_{(0,T)\times\partial \Omega}u^j_t\overline{\partial_\nu u^j}dtdS\right) + C\|{\bf X}_j\ff\|^2_{H^0(\Omega_0)\times H^{-1}(\Omega_0)}.
$$
Since $\uu^j$ solves \eqref{PAT}, the Robin boundary conditions imply
$$
\|{\bf X}_j\ff\|^2_{H^1(\Omega_0)\times H^0(\Omega_0)}\leq C\int_{(0,T)\times\Gamma}\lambda |u^j_t|^2dtdS + C\|{\bf X}_j\ff\|^2_{H^0(\Omega_0)\times H^{-1}(\Omega_0)}.
$$
Up to a smooth error, we can write $\uu^j = {\bf Q}{\bf X}_j\ff$ with ${\bf Q}$ the FIO (parametrix) constructed in Section \ref{subsec:GO}. By means of Egorov's theorem we can find another family of zero order $\Psi$DO's, namely $\{\tilde{\bf X}_j\}_j$, such that ${\bf Q}{\bf X}_j = \tilde{{\bf X}}_j{\bf Q}$ modulo smoothing operator, therefore we get $\uu^j\cong \tilde{{\bf X}}_j{\bf Q}\ff\cong \tilde{{\bf X}}_j e^{t{\bf P}_a}\ff=\tilde{{\bf X}}_j\uu$. Then,
$$
\begin{aligned}
\|\ff\|^2_{H^1(\Omega_0)\times H^0(\Omega_0)} &\leq \sum_j\|{\bf X}_j\ff\|^2_{H^1(\Omega_0)\times H^0(\Omega_0)} + \|(\text{Id}-\sum_j{\bf X}_j)\ff\|^2_{H^1(\Omega_0)\times H^0(\Omega_0)}\\
&\leq C\sum_j\int_{(0,T)\times\Gamma}\lambda |u^j_t|^2dtdS + C\|\ff\|^2_{H^0(\Omega_0)\times H^{-1}(\Omega_0)}\\
&\leq C\int_{(0,T)\times\Gamma}\lambda |u_t|^2dtdS + C\|\ff\|^2_{H^0(\Omega_0)\times H^{-1}(\Omega_0)}.
\end{aligned}
$$
The proof concludes by following the same compactness-uniqueness argument employed in previous sections, 
where we use that $\ff\mapsto \lambda^{1/2}u_t$ is a continuous and injective map (by Theorem \ref{thm:uniq_stab}) from $\cH(\Omega_0)$ to $L^2((0,T)\times\Gamma)$.
\end{proof}

\section{Numerical simulations with partial data}\label{sec:numerics}

The main purpose of the next numerical experiments is to illustrate the theoretical finding of previous sections, hence, issues related to optimality and accuracy of the implementations are out of the scope of this work. We attempt to compare the performance of the reconstruction in terms of $L^2$ and $L^\infty$ relative errors, between the back-projection approximation (i.e. solving system \eqref{Homan_TR}) and the dissipative Neumann Series approximation introduced above when only partial observations are available and the medium enforces a smoothly varying damping of acoustic waves. 

We consider a semi-bounded geometry where we set $\Omega = [-1,1]^2$ as our region of interest, and take an initial condition supported inside the subdomain $\Omega_0 = [-0.9667,0.9667]^2$. We assume that a portion of the boundary containing $x=-1$ is open and we assign Robin boundary condition to the rest of $\partial\Omega$ (see Figure \ref{fig:phantoms}).
The open region is simulated by considering a larger domain $\Omega'=[-1-\delta,1]\times[-1,1]$, for some $\delta>0$ specified below. 
In all of our simulations, we consider a $601\times601$ spatial grid inside $\Omega$ with mesh size $\Delta x = \Delta y = 0.0033$. In order to guarantee stability of the finite difference schemes employed in the simulations, we impose the Courant-Friedrichs-Lewy condition and set $\Delta t = 0.3\cdot\Delta x/(\sqrt{2}\max{c})$ where $c$ is a (known) sound speed.

The non-trapping sound speed is taken from \cite{QSUZ}, which is defined by the formula
\begin{figure}
\centerline{
\includegraphics[scale=0.2]{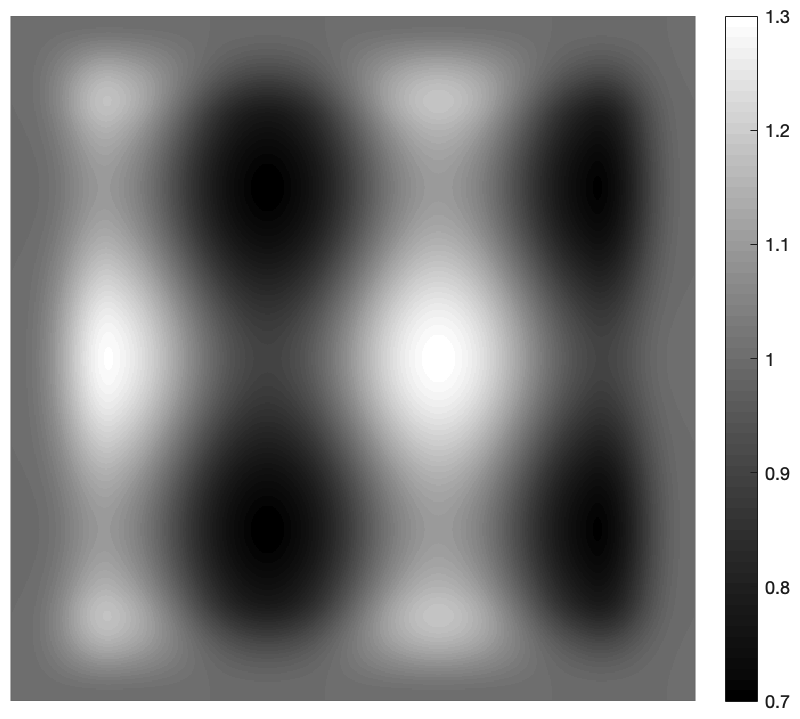}
}
\caption{Sound speed as in \eqref{ss}.}
\label{fig:ss}
\end{figure}
\begin{equation}\label{ss}
c(x,y) = \chi_1(x,y)\cdot\big(1.0 + 0.2\cdot\sin(2\pi x) + 0.1\cdot\cos(2\pi y)\big),
\end{equation}
with $\chi_1=1$ in $\Omega_0$, and smoothly bringing the sound speed to 1 near $\partial\Omega$. 
We work with two damping coefficients, namely, a soft linear attenuation and a stronger one proportional to the sound speed:
\begin{eqnarray}\label{a1}
a(x,y) &=& \chi_2(x,y)\cdot0.5\cdot(x+1);\\
\label{a2}
a(x,y) &=& \chi_2(x,y)\cdot2.0\cdot c(x,y).
\end{eqnarray}
In both cases we multiply by a smooth cutoff $\chi_2$, compactly supported in $\Omega$ in order to set a null damping near $\partial\Omega$ and outside $\Omega$. 

We perform two numerical simulations to test the convergence of the Neumann series. For the attenuation in \eqref{a1} we consider initial conditions $\ff = (f,-af)$ with $f$ the Shepp-Logan phantom, while for the second attenuation coefficient \eqref{a2} we consider $\ff$ of the same form but with $f$ given by two smaller copies of the Shepp-Logan phantom. In both cases, the initial source $f$ is slightly smoothed out to prevent numerical complications with large frequencies. 
\begin{figure}
\centerline{
\includegraphics[scale=0.2]{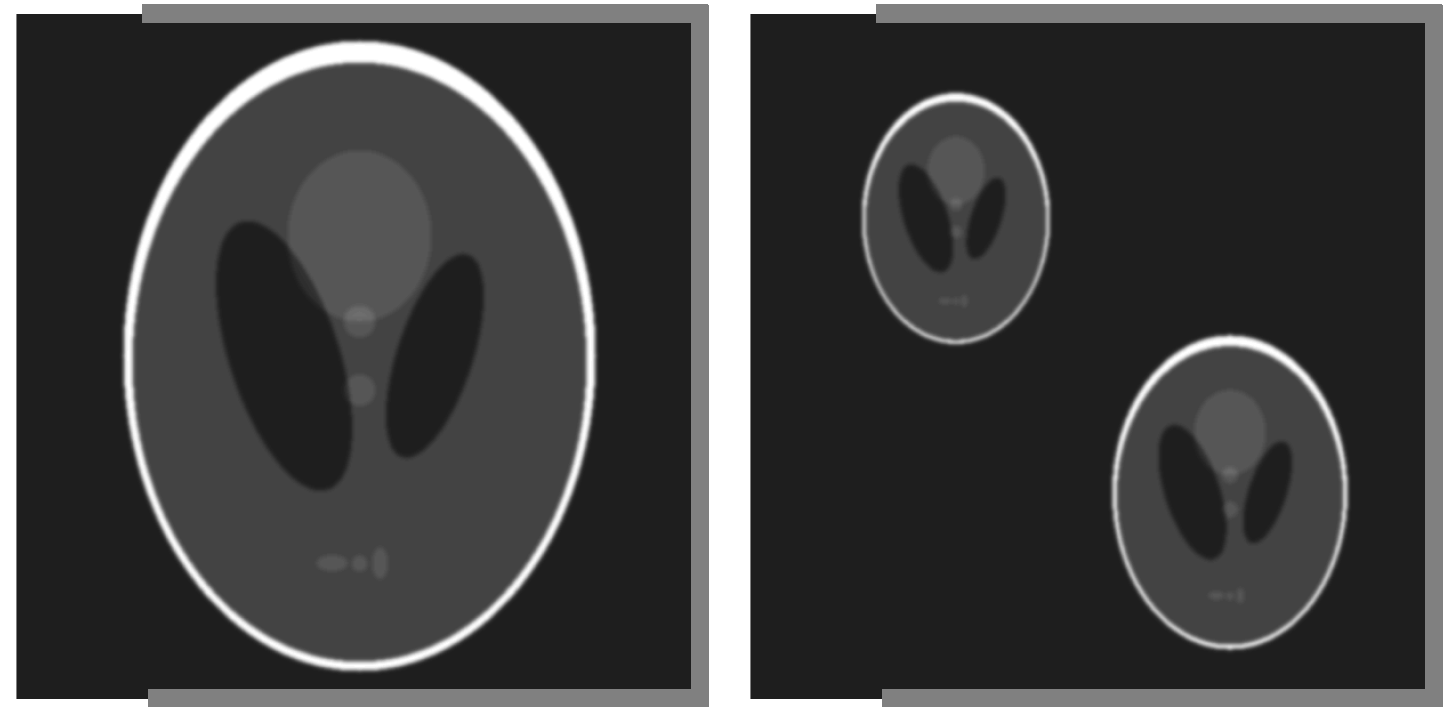}
}
\caption{Initial condition $f$ for simulation 1 ({\em left}) and 2 ({\em right}). The gray line surrounding the domain represents the support of $\lambda$, hence, the observation region.}
\label{fig:phantoms}
\end{figure}
Finally, the boundary observation region $\Gamma$ is a connected curved contained in the union of the edges $y=\pm 1$ and $x=1$ of the square domain $\Omega$, where we set an absorption coefficient $\lambda(x,y)=1$ in most of $\Gamma$ and decaying smoothly to zero as approaching $\partial\Gamma$. See Figure \ref{fig:phantoms}.

The back projection and the subsequent forward propagation in the Neumann series iterations were implemented by following the Perfectly Matched Layer (PML) scheme introduced in \cite{LTa}, which consist of a system of two first order equations with PML boundary conditions on the open part of $\partial\Omega'$ (with $\delta = 0.033$) to simulate an unbounded domain. The rest of the boundary, corresponding to $\Gamma$, is set with Robin conditions. The PML algorithm has been previously used in the context of PAT, for instance, in \cite{QSUZ} for the unattenuated case. 

With the purpose of avoiding the inverse crime, we simulate our data by using a standard finite difference scheme with fourth-order spatial accuracy and second-order accuracy in time. The reason behind the higher order spatial accuracy is to reduce the numerical dispersion of waves traveling across the domain. For simplicity we simulate the semi-bounded region by choosing a larger $\delta$ in the definition of $\Omega'$, and set a smoothly increasing attenuation to kill the propagation of waves away from $\Omega$. 

\begin{figure}
\centerline{
\includegraphics[scale=0.2]{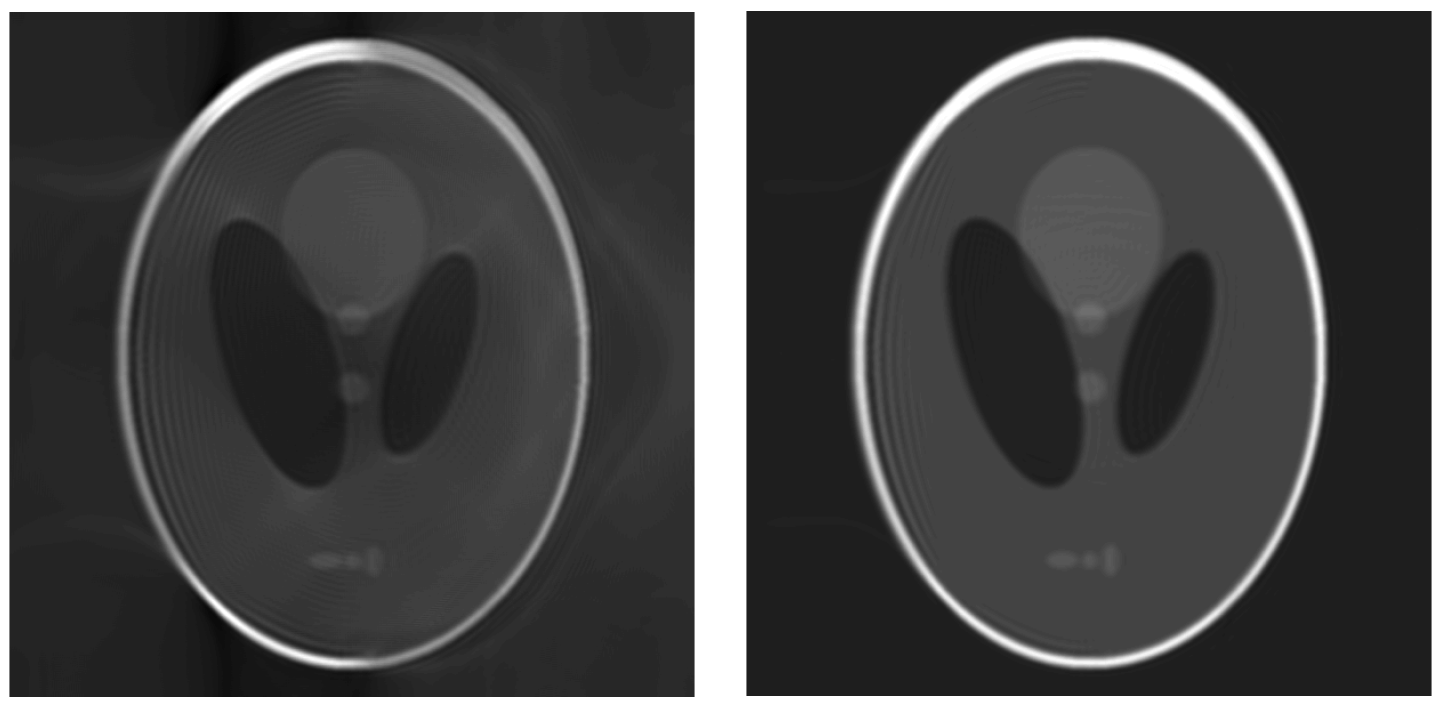}
}
\centerline{
\hspace{-0em}\includegraphics[scale=0.2]{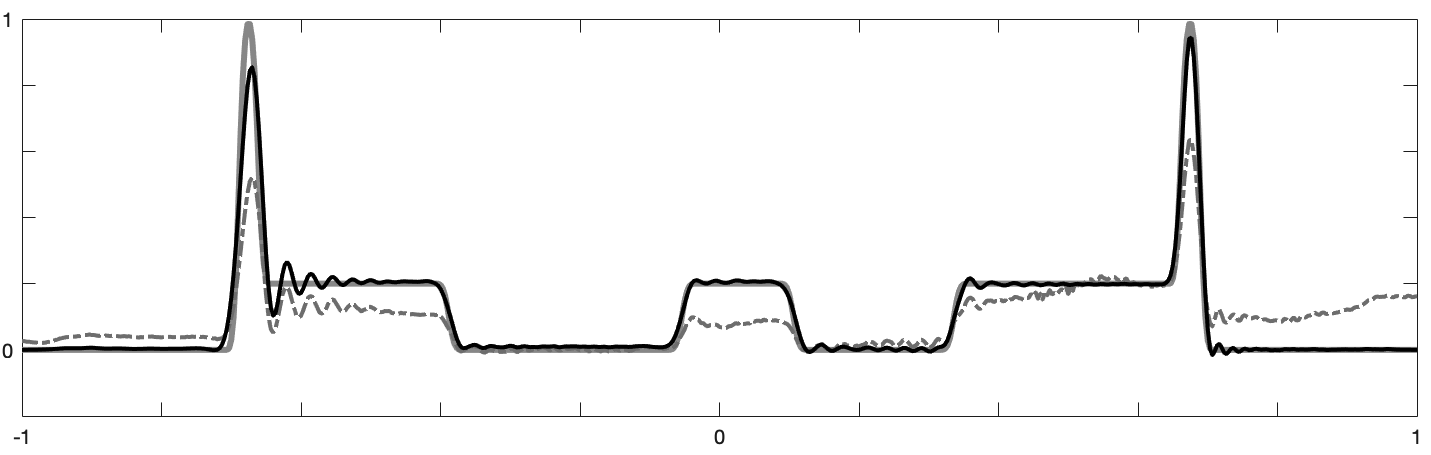}
}
\caption{Reconstruction of initial condition for the first simulation: back-projection ({\em top left}) and 20 terms of Neumann series ({\em top right}), both with pixel values on the interval $[-0.2,1]$. {\em Bottom:} cross section at $y=0$. The gray line, the black solid line and the gray dash-dotted line correspond, respectively, to the  true initial source, the Neumann series approximation and the back-projection approximations.}
\label{fig:rec_ex1}
\end{figure}

\begin{figure}
\centerline{
\includegraphics[scale=0.2]{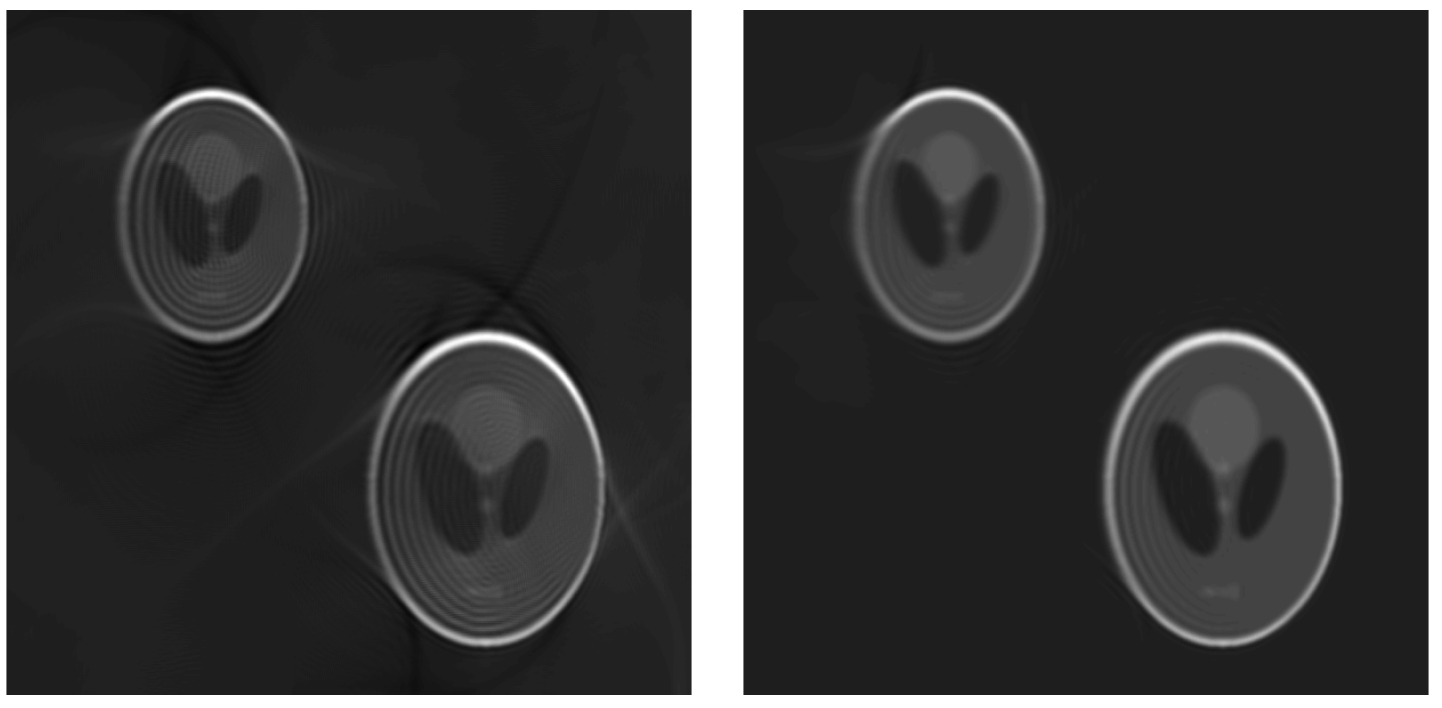}
}
\centerline{
\includegraphics[scale=0.2]{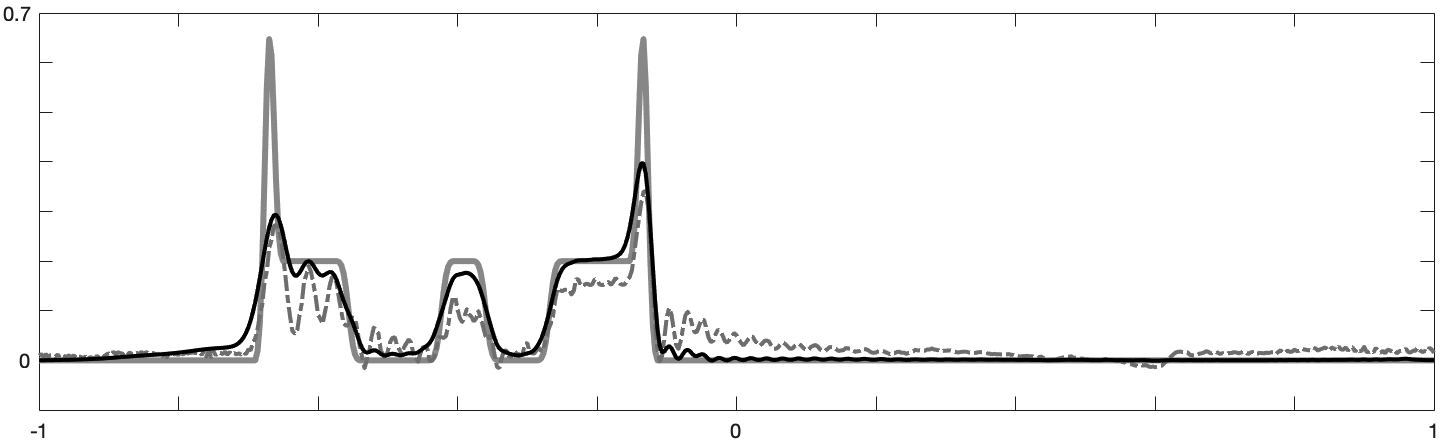}
}
\centerline{
\includegraphics[scale=0.2]{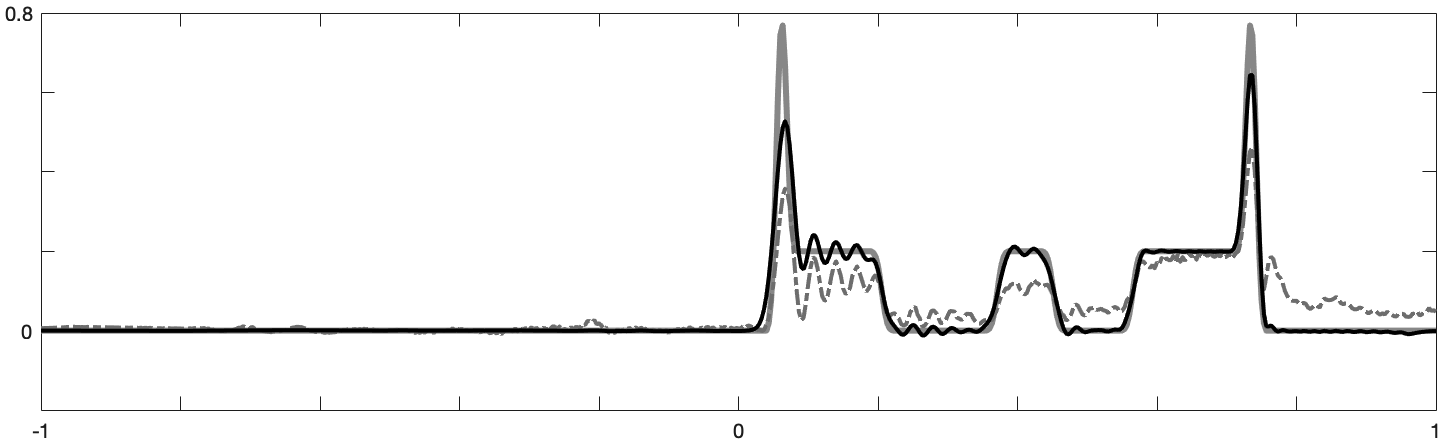}
}
\caption{Reconstruction of initial condition for the second simulation: back-projection ({\em left}) and 60 terms of the Neumann series ({\em right}), both with pixel values on the interval $[-0.2,1]$. {\em Middle and bottom:} cross section at $y=0.4$ and $y=-0.4$ respectively. The gray line, the black solid line and the gray dash-dotted line correspond, respectively, to the  true initial source, the Neumann series approximation and the back-projection approximations.}
\label{fig:rec_ex2}
\end{figure}

The first simulation, corresponding to the case of the soft linear attenuation in \eqref{a1} resulted in respective relative $L^\infty$ and $L^2$ errors of around 26\% and 9\% for the Neumann series approximation with 20 terms, compared to 53\% and 40\% for the standard back-projection. In the case of a strong attenuation as in \eqref{a2}, the relative $L^\infty$ and $L^2$ errors were respectively 49\% and 20\% for the Neumann series approximation with 60 terms, while 50\% and 32\% for the back-projection reconstruction. The results of these experiments can be visualized in Figures \ref{fig:rec_ex1} and \ref{fig:rec_ex2}, where top-view and cross-section images are presented to compare the performance of both reconstruction methods.

\section*{Acknowledgments}
Most of this work was done while the author was a W. H. Kruskal Instructor at the University of Chicago. The author is deeply grateful for the hospitality and support received at the University of Chicago and the Statistics Department, and particularly grateful for the support and mentoring of Guillaume Bal.

The author would also like to thanks Plamen Stefanov for valuable conversations and for bringing \cite{St} to the author's attention; and Sebastian Acosta and Carlos Montalto for their comments on early versions of the manuscript.
\bigskip


\end{document}